\theoremstyle{plain}
\newtheorem{theorem}{Theorem}[section]
\newtheorem{corollary}[theorem]{Corollary}
\newtheorem{prop}[theorem]{Proposition}
\newtheorem{lemma}[theorem]{Lemma}
\theoremstyle{definition}
\newtheorem{remarks}[theorem]{Remarks}
\newtheorem{remark}[theorem]{Remark}
\newtheorem{fact}[theorem]{Fact}
\newtheorem{definition}[theorem]{Definition}
 \DeclareMathOperator{\e}{\mathrm{e}}
 \DeclareMathOperator{\Id}{\mathrm{Id}}
 \DeclareMathOperator{\Clop}{\mathrm{Clop}}
\newcommand{\tri} {\vert \hspace{-1.5pt} \vert  \hspace{-1.5pt} \vert }
\newcommand{\C}{\mathbb{C}}
\newcommand{\R}{\mathbb{R}}
\newcommand{\N}{\mathbb{N}}
\newcommand{\eps}{\varepsilon}
\renewcommand{\leq}{\leqslant}
\renewcommand{\geq}{\geqslant}
\newcounter{equi1}
\begin{document}
\begin{center}
{\tiny  J.~Math.~Anal.~Appl.\ (2008),
\texttt{doi:10.1016/j.jmaa.2008.04.021} }
\end{center}
\title[Extremely non-complex $C(K)$ spaces]{Extremely non-complex $\boldsymbol{C(K)}$ spaces}

\author{Piotr Koszmider}

\address{Instytut Matematyki Politechniki \L\'odzkiej,
ul. W\'olcza\'nska 215, 90-924 \L\'od\'z, Poland}
\email{\texttt{pkoszmider@im0.p.lodz.pl}}

\author{Miguel Mart\'{\i}n}
\author{Javier Mer\'{\i}}
 \address{Departamento de An\'{a}lisis Matem\'{a}tico \\ Facultad de
 Ciencias \\ Universidad de Granada \\ 18071 Granada, Spain}
 \email{\texttt{mmartins@ugr.es, jmeri@ugr.es}}
\thanks{Research partially supported by Spanish MEC project no.\
 MTM2006-04837 and Junta de Andaluc\'{\i}a grants FQM-185 and P06-FQM-01438}
 \subjclass[2000]{46B20, 47A99}
 \keywords{Banach space; few operators; complex structure; Daugavet equation;
 space of continuous functions}

\date{December 19th, 2007}

\dedicatory{To Isaac Namioka on his 80th birthday}

  \thispagestyle{empty}

\begin{abstract}
We show that there exist infinite-dimensional extremely
non-complex Banach spaces, i.e.\ spaces $X$ such that the norm
equality $\|\Id + T^2\|=1 + \|T^2\|$ holds for every bounded
linear operator $T:X\longrightarrow X$. This answers in the
positive Question~4.11 of [Kadets, Mart\'{\i}n, Mer\'{\i}, Norm
equalities for operators, \emph{Indiana U.\ Math.\
J.}~\textbf{56} (2007), 2385--2411]. More concretely, we show
that this is the case of some $C(K)$ spaces with few operators
constructed in [Koszmider, Banach spaces of continuous
functions with few operators, \emph{Math.\ Ann.}~\textbf{330}
(2004), 151--183] and [Plebanek, A construction of a Banach
space $C(K)$ with few operators, \emph{Topology
Appl.}~\textbf{143} (2004), 217--239]. We also construct
compact spaces $K_1$ and $K_2$ such that $C(K_1)$ and $C(K_2)$
are extremely non-complex, $C(K_1)$ contains a complemented
copy of $C(2^\omega)$ and $C(K_2)$ contains a (1-complemented)
isometric copy of $\ell_\infty$.
\end{abstract}

\maketitle

\section{Introduction}

Let $X$ be a real Banach space. We write $L(X)$ for the space
of all bounded linear operators on $X$ endowed with the
supremum norm and $W(X)$ for its subspace of all weakly compact
operators on $X$.

The aim of this paper is to show that there are
infinite-dimensional Banach spaces $X$ for which the norm
equality
\begin{equation}\label{eq:square}\tag{\textrm{sDE}}
\|\Id + T^2\|=1 + \|T^2\|
\end{equation}
holds for every $T\in L(X)$. Actually, we will show that there
are several different compact (Hausdorff) spaces $K$ such that
the corresponding real spaces $C(K)$ satisfy this property.
This answers positively Question~4.11 of the very recent paper
\cite{K-M-M} by V.~Kadets and the second and third authors.

Let us motivate the interest of the question.

Firstly, a good interpretation of the property we are dealing
with is given by the so-called complex structures on real
Banach spaces. We recall that a (real) Banach space $X$ is said
to have a \emph{complex structure} if there exists $T\in L(X)$
such that $T^2=-\Id$. This allows us to define on $X$ a
structure of vector space over $\C$, by setting
$$
(\alpha + i \beta)x = \alpha x + \beta T(x) \qquad \bigl(\alpha+i\beta \in \C,
\ x\in X\bigr).
$$
Moreover, by just defining
$$
\tri x\tri =\max\bigl\{\|\e^{i\theta}x\|\ : \ \theta \in[0,2\pi]\bigr\} \qquad (x\in X)
$$
one gets a complex norm on $X$ which is equivalent to the original
one. Conversely, if $X$ is the real space underlying a complex
Banach space, then the bounded linear operator defined by
$T(x)=i\,x$ for every $x\in X$, satisfies that $T^2=-\Id$. In the
finite-dimensional setting, complex structures appear if and only if
the dimension of the space is even. In the infinite-dimensional
setting, there are real Banach spaces admitting no complex
structure. This is the case of the James' space $\mathcal{J}$ (see
\cite[\S 3.4]{Albiac-Kalton} for the definition), as it was shown by
J.~Dieudonn\'{e} in 1952 \cite{Dieu1952}. More examples of this kind
have been constructed over the years, including uniformly convex
examples (S.~Szarek 1986 \cite{Szarek-PAMS86}), the hereditary
indecomposable space of T.~Gowers and B.~Maurey \cite{GowMau1993}
or, more generally, any space such that every operator on it is a
strictly singular perturbation of a multiple of the identity. Gowers
also constructed a space of this kind with an unconditional basis
\cite{Gower1994,GowMau1997}. We refer the reader to the very recent
papers by V.~Ferenczi and E.~Medina~Galego
\cite{Ferenczi-Adv,FerenMedina} and references therein for a
discussion about complex structures on spaces and on their
hyperplanes.

Let us comment that if equation \eqref{eq:square} holds for all
operators on a Banach space $X$, then $X$ does not have complex
structure in the strongest possible way, showing that, for every
$T\in L(X)$, the distance from $T^2$ to $-\Id$ is the biggest
possible, namely $1 + \|T^2\|$. This observation justifies the
following definition which we will use along the paper.

\begin{definition}
We say that $X$ is \emph{extremely non-complex} if the norm
equality
\begin{equation*}\tag{\textrm{\ref{eq:square}}}
\|\Id + T^2\|=1 + \|T^2\|
\end{equation*}
holds for every $T\in L(X)$.
\end{definition}

Secondly, let us explain shortly the history leading to the
appearance of \eqref{eq:square} and the question of the
existence of infinite-dimensional extremely non-complex spaces
in the already cited paper \cite{K-M-M}. The interest in norm
equalities for operators goes back to the 1960's, when
I.~Daugavet \cite{Dau} showed that each compact operator $T$ on
$C[0,1]$ satisfies the norm equality
\begin{equation*}\label{DE}\tag{DE}
\|\Id + T\|= 1 + \|T\|.
\end{equation*}
The above equation is nowadays referred to as \emph{Daugavet
equation}. This result has been extended to various classes of
operators on some Banach spaces, including weakly compact
operators on $C(K)$ for perfect $K$ and on $L_1(\mu)$ for
atomless $\mu$ (see \cite{Wer0} for an elementary approach). In
the late 1990's the study of the geometry of Banach spaces
having the so-called Daugavet property was initiated. Following
\cite{KSSW0,KSSW}, we say that a Banach space $X$ has the
\emph{Daugavet property} if every rank-one operator $T\in L(X)$
satisfies \eqref{DE}. In such a case, every operator on $X$ not
fixing a copy of $\ell_1$ also satisfies \eqref{DE}; in
particular, this happens to every weakly compact operator on
$X$. This property induces various isomorphic restrictions. For
instance, a Banach space with the Daugavet property does not
have the Radon-Nikod\'{y}m property, it contains $\ell_1$, and it
does not isomorphically embed into a Banach space with
unconditional basis. We refer the reader to the books
\cite{AbrAli-1,AbrAli-2} and the papers \cite{KSSW,WerSur} for
background and more information.

The aim of the cited paper \cite{K-M-M} was to study whether it is
possible to define interesting isometric properties by requiring all
rank-one operators on a Banach space to satisfy a suitable norm
equality. Most of the results gotten there are occlusive, showing
that the most natural attempts to introduce new properties by
considering other norm equalities for operators (like
$\|g(T)\|=f(\|T\|)$ for some functions $f$ and $g$) lead in fact to
the Daugavet property of the space. Nevertheless, there are some
results in the paper valid in the complex case which are not known
to be true for the real case. For instance, it is not known if a
real Banach space $X$ where every rank-one operator $T\in L(X)$
satisfies
$$
\|\Id + T^2\|=1+\|T^2\|
$$
has the Daugavet property (see also \cite{Oik2007} for more
information). Contrary to the Daugavet equation, the equation
above could be satisfied by all bounded linear operators on a
Banach space $X$. Actually, this holds in the simple case
$X=\R$. During an informal discussion on these topics in May
2005, Gilles Godefroy asked the second and the third authors of
this paper about the possibility of finding Banach spaces (of
dimension greater than $1$) having this property (i.e.\ finding
extremely non-complex Banach spaces of dimension greater than
one). Let us comment that, if a Banach space $X$ is extremely
non complex, then it cannot contain a complemented subspace
with complex structure (such as a square) and with summand
$\alpha$-complemented with $\alpha<2$. This can be seen by
applying \eqref{eq:square} to the operator $T\in L(X)$ defined
by $Tx=0$ on the summand, and by $Tx=Jx$ with square of $J$
equal to $-\Id$ on the complemented subspace with complex
structure.

Our (successful) approach to this problem has been to consider
Banach spaces $C(K)$ with few operators in the sense introduced
by the first author of this paper in \cite{Koszmider}. Let us
give two needed definitions.

\begin{definition}\label{def-weak-multiplication-weak-multiplier}
Let $K$ be a compact space and $T\in L(C(K))$. We say that $T$ is a
\emph{weak multiplier} if $T^*=g\Id+S$ where $g:K\longrightarrow \R$
is a function which is integrable with respect to all Radon measures
on $K$ and $S\in W\big(C(K)^*\big)$. If one actually has $T=g\Id+S$
with $g\in C(K)$ and $S\in W\big(C(K)\big)$, we say that $T$ is a
\emph{weak multiplication}.
\end{definition}

In the literature, as far as now, there are several
nonisomorphic types of $C(K)$ spaces with few operators in the
above sense (in ZFC): (1) of \cite{Koszmider} for $K$ totally
disconnected such that $C(K)$ is a subspace of $\ell_\infty$
and  all operators on $C(K)$ are weak multipliers; (2) of
\cite{Koszmider} for  $K$ such that $K\setminus F$ is connected
for every finite $F\subseteq K$, such that $C(K)$ is a subspace
of $\ell_\infty$ and  all operators on $C(K)$ are weak
multipliers; these $C(K)$s, as shown in \cite{Koszmider}, are
indecomposable Banach spaces, hence they are nonisomorphic to
spaces of type (1);  (3) of \cite{Plebanek} for connected $K$
such that all operators on $C(K)$ are weak multiplications;
these spaces are not subspaces of $\ell_\infty$ and hence are
nonisomorphic to spaces of type (1) nor (2) (It is still not
known if such spaces can be subspaces of $\ell_\infty$ without
any special set-theoretic hypotheses; in \cite{Koszmider} it is
shown that the continuum hypothesis is sufficient to obtain
such spaces).

We will show in section~\ref{sec:weakmultiplications} that $C(K)$
spaces on which every operator is a weak multiplication with $K$
perfect give the very first examples of infinite-dimensional
extremely non-complex Banach spaces. The argument is elementary.
With a little bit more of work, we prove in
section~\ref{sec:weakmultipliers} that also $C(K)$ spaces on which
every operator is a weak multiplier with $K$ perfect are extremely
non-complex.

One may think that the fact that a $C(K)$ space is extremely
non-complex implies some kind of fewness of operators. The aim
of section~\ref{sec:further-examples} is to give further
examples to show that this is not the case. We construct
compact spaces $K_1$ and $K_2$ such that $C(K_1)$ and $C(K_2)$
are extremely non-complex, $C(K_1)$ contains a complemented
copy of $C(2^\omega)$ and $C(K_2)$ contains a
($1$-complemented) isometric copy of $\ell_\infty$. One may use
elementary arguments to show that the above $C(K_1)$ and
$C(K_2)$ have many operators however let us quote a deeper
recent result of I.~Schlackow:

\begin{theorem}\cite[Theorem~4.6]{Schl}\label{Schlackow-theorem}
Let $K$ be a compact space. All operators on $C(K)$ are weak multipliers
if and only if the ring $L(C(K))/W(C(K))$ is commutative.
\end{theorem}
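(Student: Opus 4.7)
\emph{Forward direction.} Let $T_1, T_2 \in L(C(K))$ be weak multipliers with $T_i^* = g_i\,\Id + S_i$, $S_i \in W(C(K)^*)$. A direct expansion of the adjoint $(T_1T_2 - T_2T_1)^* = T_2^* T_1^* - T_1^* T_2^*$ gives
\[
T_2^* T_1^* - T_1^* T_2^* \;=\; (g_2 g_1 - g_1 g_2)\,\Id \,+\, [g_2\,\Id,\,S_1] \,+\, [S_2,\,g_1\,\Id] \,+\, [S_2,\,S_1].
\]
The first summand vanishes, because multiplications by scalar functions on $C(K)^*$ commute. Each $g_i\,\Id = T_i^* - S_i$ is bounded, and $W(C(K)^*)$ is a two-sided ideal in $L(C(K)^*)$, so the three remaining brackets all lie in $W(C(K)^*)$. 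Hence $(T_1T_2 - T_2T_1)^* \in W(C(K)^*)$, and Gantmacher's theorem gives $T_1T_2 - T_2T_1 \in W(C(K))$, i.e.\ $L(C(K))/W(C(K))$ is commutative.

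\emph{Reverse direction.} Assume $L(C(K))/W(C(K))$ is commutative. For $f \in C(K)$ let $M_f \in L(C(K))$ be the multiplication operator $M_f h = fh$, so that $M_f^* = f\,\Id$ on $C(K)^*$. Commutativity modulo $W$ then yields, for every $T \in L(C(K))$,
\[
[T^*,\,f\,\Id] \;=\; (M_f T - T M_f)^* \;\in\; W(C(K)^*) \qquad (f \in C(K)),
\]
so $T^*$ approximately intertwines every multiplication by a continuous function. The plan is to distill from this an integrable Borel function $g$ on $K$ for which $T^* - g\,\Id$ is weakly compact. Since $\delta_x$ is a joint eigenvector of $\{M_f^*\}_{f \in C(K)}$ with eigenvalue $f(x)$, the commutation relation forces $T^*\delta_x$ to carry a distinguished atomic component at $x$; define $g(x)$ to be the corresponding coefficient and verify that $\|g\|_\infty \le \|T\|$ and that $g$ is integrable against every Radon measure on $K$, using regularity of Radon measures together with the pointwise control of $T^*\delta_x$. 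Finally, propagate this pointwise picture from the point masses to all of $C(K)^*$ by approximation of arbitrary Radon measures, invoking the full family of commutation identities and a Grothendieck-style characterization of weakly compact subsets of $C(K)^*$ (such as uniform countable additivity, or vanishing of mass along disjoint sequences of open sets) to conclude that $S := T^* - g\,\Id$ is weakly compact.

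\emph{Main obstacle.} The forward direction is a routine ideal computation. The substance of the theorem sits in the reverse direction: one must pass from the \emph{qualitative} commutation $[T^*, f\,\Id] \in W$, one $f$ at a time, to the \emph{global} description $T^* = g\,\Id + S$ with $S$ weakly compact on all of $C(K)^*$. This requires extracting a single function $g$ from an a priori purely pointwise argument on $\{\delta_x : x \in K\}$, and then upgrading pointwise control of $T^*\delta_x - g(x)\delta_x$ to operator-norm control of $T^* - g\,\Id$; it is precisely at this step that the weak-compactness criteria for subsets of $C(K)^*$, adapted to the general compact $K$, play the decisive role.
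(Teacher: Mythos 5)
There is a genuine gap, and it is exactly where you yourself place the ``main obstacle''. First, note that the paper does not prove this statement at all: it is quoted from Schlackow \cite[Theorem~4.6]{Schl}, so there is no in-paper argument to compare with; your proposal has to stand on its own. Your forward direction does stand: writing $T_i^*=g_i\Id+S_i$, using that the two multiplications commute, that $g_i\Id=T_i^*-S_i$ is bounded, that $W(C(K)^*)$ is a two-sided ideal, and Gantmacher's theorem, you correctly get $T_1T_2-T_2T_1\in W(C(K))$. (A minor point to keep in mind: the paper's definition only asks $g_i$ to be integrable against all Radon measures, not bounded, but since $g_i\Id=T_i^*-S_i$ is a bounded operator the ideal argument goes through as you wrote it.)

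The reverse direction, however, is not a proof but a programme. From commutativity of $L(C(K))/W(C(K))$ you correctly extract the hypotheses $[T^*,f\Id]\in W(C(K)^*)$ for all $f\in C(K)$, and you name the natural candidate $g(x)=T^*(\delta_x)(\{x\})$; but every subsequent step is announced rather than carried out: you never show why the commutation relations force $T^*\delta_x$ to have the asserted ``distinguished atomic component'' with the right coefficient, why $g$ is integrable with respect to every Radon measure, nor --- the decisive point --- why $S:=T^*-g\Id$ is weakly compact. Pointwise control on the point masses alone cannot possibly yield this last claim: for instance, if $\varphi$ is a fixed-point-free homeomorphism of $2^\omega$ and $Tf=f\circ\varphi$, then $T^*\delta_x=\delta_{\varphi(x)}$, so $g_{T^*}\equiv 0$ and $T^*\delta_x-g(x)\delta_x$ is perfectly controlled at every point, yet $T^*$ is an isometric non-weakly-compact operator and $T$ is not a weak multiplier. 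So any correct argument must use the full family of commutator hypotheses in an essential, quantitative way (e.g.\ through a Dieudonn\'{e}--Grothendieck-type criterion applied to suitably chosen disjoint clopen or open sets), and that is precisely the content of Schlackow's proof that your sketch replaces with the words ``distill'', ``propagate'' and ``invoke''. As it stands, you have proved only the easy implication of the theorem.
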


When $C({\tilde K})$ is isomorphic to a complemented subspace
of $C(K)$, then the ring $L(C({\tilde K}))/W(C({\tilde K}))$
can be canonically included in $L(C(K))/W(C(K))$. Hence, if a
$C(K)$ has a complemented isomorphic copy of a space $C({\tilde
K})$ then, the corresponding ring is at least ``as much
noncommutative as" that of $C({\tilde K})$. In particular if
$C({\tilde K})$ has operators which are not weak multipliers,
in other words, by the above theorem, the corresponding ring is
noncommutative as in the case of $\ell_\infty$ or
$C(2^\omega)$, then $C(K)$ has operators which are not weak
multipliers.

To obtain $K_1$ and $K_2$ as above using the uniform language
of Boolean algebras and their Stone spaces we will need
$C(K)$'s where all operators are weak multipliers  which are a
bit different than those of types (1)-(3) described above.
Namely we will need $K$ perfect and totally disconnected.
Actually this type was the original construction of earlier
versions of \cite{Koszmider} which later was split into simpler
type (1) which is not perfect and more complex type (2) which
is perfect but not totally disconnected. In
section~\ref{sec:further-examples} we explain how to modify
arguments of \cite{Koszmider} to obtain the desired $K$.

We finish the introduction by commenting that the first attempt
to find extremely non-complex Banach spaces could have been to
check whether the known examples of spaces without complex
structure work. Unfortunately, most of those examples are
reflexive or quasireflexive spaces, and there are no extremely
non-complex spaces in these clases (actually, the unit ball of
an extremely non-complex space does not contain any strongly
exposed point, see \cite{Oik2007}). With respect to the family
of spaces with few operators, it is readily checked that if a
Banach space $X$ has the Daugavet property and every operator
$T\in L(X)$ is a strictly singular perturbation of a multiple
of the identity, then the space is extremely non-complex.
Unfortunately, we have not been able to find any example of
this kind. On the one hand, if a Banach space is hereditarily
indecomposable, then it cannot have the Daugavet property
(which implies containment of $\ell_1$). It is worth mentioning
the existence of a hereditarily indecomposable space whose dual
contains $\ell_1$ \cite[Remark~8.1]{Argyros-Felouzis}. On the
other hand, most of the research about Banach spaces with few
operators deals with the isomorphic structure and does not pay
too much attention to isometric questions.

\section{The first example: weak multiplications} \label{sec:weakmultiplications}
This short section is devoted to give a sufficient condition
for a weak multiplication to satisfy the Daugavet equation,
from which it will be straightforward to get the very first
example of an extremely non-complex Banach space, namely, every
$C(K)$ space where the only bounded linear operators are the
weak multiplications.

\begin{lemma}\label{lemma-weakmultiplication}
Let $K$ be a perfect compact space. If an operator $T\in L(C(K))$
has the form $T=g\Id + S$ where $g\in C(K)$ is non-negative and $S$
is weakly compact, then $T$ satisfies the Daugavet equation.
\end{lemma}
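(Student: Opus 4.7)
The plan is to reduce the nontrivial inequality $\|\Id + T\| \geq 1 + \|T\|$ (the other is the triangle inequality) to evaluating $(\Id+T)^*$ at a single well-chosen Dirac measure $\delta_s$, and then to produce such an $s$ by combining the weak compactness of $S$ with the perfectness of $K$.

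First I would use that $\|T\| = \sup_{t \in K} \|T^*\delta_t\|$ and $\|\Id+T\| = \sup_{t \in K}\|\delta_t + T^*\delta_t\|$. Writing $\mu_t := S^*\delta_t$, one has $T^*\delta_t = g(t)\delta_t + \mu_t$. Setting $a(t) := \mu_t(\{t\})$ and decomposing $\mu_t = a(t)\delta_t + \mu_t'$ with $\mu_t'(\{t\}) = 0$, I get
$$
\|T^*\delta_t\| = |g(t)+a(t)| + \|\mu_t'\|, \qquad \|\delta_t + T^*\delta_t\| = |1+g(t)+a(t)| + \|\mu_t'\|.
$$
Because $g \geq 0$, whenever $a(t) = 0$ the second quantity exceeds the first by exactly $1$. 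So it suffices to locate $s$ with $a(s)=0$ at which $\|T^*\delta_s\|$ is nearly $\|T\|$.

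The key step will be a countability claim: $E := \{t \in K : a(t) \neq 0\} = \bigcup_n \{t : |a(t)| \geq 1/n\}$ is countable. I would prove this by contradiction from the Dieudonn\'e--Grothendieck characterization of weak compactness in $M(K)$. If some $\{t : |a(t)|\geq \delta\}$ were infinite, pick distinct $t_n$ in it and pass to a subsequence with $t_n \to t_\infty$, $t_n \neq t_\infty$; then use compact-Hausdorff normality to construct inductively pairwise disjoint open neighbourhoods $V_n \ni t_n$, and shrink each $V_n$ via Radon regularity so that $|\mu_{t_n}|(V_n \setminus \{t_n\}) < \delta/2$. This forces $|\mu_{t_n}(V_n)| \geq \delta/2$ on pairwise disjoint opens $V_n$, contradicting weak relative compactness of the family $\{\mu_t\}_{t \in K} \subset S^*(B_{M(K)})$.

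To conclude, given $\epsilon > 0$ I would pick $t_0$ with $\|T^*\delta_{t_0}\| > \|T\| - \epsilon$. Since $t \mapsto T^*\delta_t$ is weak-$*$ continuous and the norm is weak-$*$ lower semicontinuous, $V := \{t \in K : \|T^*\delta_t\| > \|T\| - \epsilon\}$ is a nonempty open subset of $K$. Perfectness of $K$ implies $V$ has no isolated points; being locally compact Hausdorff, $V$ is a Baire space and therefore uncountable, so the countable set $E$ cannot exhaust it. For any $s \in V \setminus E$ one gets $\|\Id + T\| \geq \|\delta_s + T^*\delta_s\| = 1 + \|T^*\delta_s\| > 1 + \|T\| - \epsilon$, and $\epsilon \to 0$ finishes the proof. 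The main obstacle is the countability of $E$, which hinges on the precise form of weak compactness in $M(K)$ and on the separation argument producing the disjoint neighbourhoods $V_n$; once $E$ is known countable, the Baire category observation on the perfect open set $V$ supplies $s$ immediately.
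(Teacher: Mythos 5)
Your argument is correct in substance, but it follows a genuinely different route from the paper's proof of this lemma. The paper argues in a few lines: after normalizing so that $\min_{t\in K}g(t)>0$ and $\|g\|\leq 1$ (the operators satisfying the Daugavet equation form a closed set stable under multiplication by positive scalars), it combines the identity $\max\{\|\Id+T\|,\ \|\Id-T\|\}=1+\|T\|$, valid for every operator on $C(K)$, with the classical fact that $\|\Id+S\|=1+\|S\|$ for weakly compact $S$ when $K$ is perfect; these give $\|\Id-(g\Id+S)\|\leq 1-\min_{t\in K}g(t)+\|S\|$ while $\|g\Id+S\|\geq \|S\|+\min_{t\in K}g(t)$, so the maximum must be attained at $\|\Id+T\|$. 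You instead work with the kernel $t\mapsto T^*\delta_t(\{t\})$, prove countability of $\{t\in K:\ S^*\delta_t(\{t\})\neq 0\}$ via Gantmacher plus Dieudonn\'e--Grothendieck, and use uncountability (via Baire) of nonempty open subsets of a perfect compact space; this is essentially the machinery the paper develops in Section~\ref{sec:weakmultipliers} for weak multipliers (Lemma~\ref{lemma-weak-compact-oper-M(K)}, Corollary~\ref{cor-weak-compact-countable}, Theorem~\ref{thm-weak-mult-perfect}). Your proof is therefore heavier at this point, but more self-contained --- it does not use the weakly compact Daugavet theorem as a black box --- and it adapts verbatim to bounded Borel $g\geq 0$ at the adjoint level, which is exactly what the weak-multiplier case requires. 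One repair is needed: the step ``pass to a subsequence with $t_n\to t_\infty$'' is not available in a general compact space (compactness does not imply sequential compactness; indeed the spaces to which the lemma is applied have no nontrivial convergent sequences, since the corresponding $C(K)$'s are Grothendieck). The step is also unnecessary: in any compact Hausdorff space one can select infinitely many of the $t_n$ together with pairwise disjoint open neighbourhoods by a standard disjointification argument, and then shrink these neighbourhoods using regularity of the measures exactly as you describe; this is how the proof of Lemma~\ref{lemma-weak-compact-oper-M(K)} proceeds.
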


We need the following two old results. The first result goes back to
the 1971 paper by J.~Duncan, C.~McGregor, J.~Pryce and A.~White
\cite[p.~483]{D-Mc-P-W}. The second one was established in the
sixties by I.~Daugavet \cite{Dau} for compact operators on $C[0,1]$
and extended to weakly compact operators by C.~Foia\c s and
I.~Singer and to arbitrary perfect $K$ by A.~Pe\l czy\'nski
\cite[p.~446]{Foias-Singer}. Elementary proofs can be found in
\cite{Wer0}.

\begin{remarks}\label{remark}$ $
\begin{enumerate}
\item[(a)] For every compact space $K$ and every $T\in
    L(C(K))$, one has
$$
\max\{\|\Id+T\|,\ \|\Id-T\|\}=1+\|T\|.
$$
\item[(b)] If $K$ is a perfect compact space, then
    $$
    \|\Id + S\|=1 + \|S\|
    $$
for every $S\in W(C(K))$.
\end{enumerate}
\end{remarks}

\begin{proof}[Proof of Lema~\ref{lemma-weakmultiplication}]
Since the set of those operators on $C(K)$ satisfying the Daugavet
equation is closed and stable by multiplication by positive scalars,
we may suppose that $\underset{t\in K}{\min}\ g(t)>0$ and
$\|g\|\leqslant1$. Now, by using Remark~\ref{remark}.a we have that
$$
\max\left\{\|\Id+g\,\Id+S\|,\ \|\Id-(g\,\Id+S)\|\right\}=1+\|g\,\Id+S\|.
$$
So, we will be done by just proving that
$$
\|\Id-(g\,\Id+S)\|<1+\|g\,\Id+S\|.
$$
On the one hand, it is easy to check that
$$
\|\Id-(g\,\Id+S)\|\leqslant \|\Id-g\,\Id\|+\|S\|=1-\underset{t\in K}{\min}\
g(t)+\|S\|.
$$
On the other hand, we observe that
\begin{align*}
\|g\,\Id+S\|&=\|\Id+S+(g\,\Id-\Id)\|\geqslant\|\Id+S\|-\|g\,\Id-\Id\|\\
&=1+\|S\|-\left(1-\underset{t\in K}{\min}\ g(t)\right)=\|S\|+\underset{t\in
K}{\min}\ g(t)
\end{align*}
where we used Remark~\ref{remark}.b. Since $\underset{t\in K}{\min}\
g(t)>0$, it is clear that
\begin{equation*}
\|\Id-(g\,\Id+S)\|<1+\|g\,\Id+S\|.\qedhere
\end{equation*}
\end{proof}

Suppose now that all the operators on a $C(K)$ space are weak
multiplications and $K$ is perfect. Then, for every $T\in
L(C(K))$ one has $T^2=g\Id+S$ where $g\in C(K)$ is non-negative
and $S$ is weakly compact. The above lemma then yields the
following result.

\begin{theorem}
Let $K$ be a perfect compact space such that every operator on
$C(K)$ is a weak multiplication. Then, $C(K)$ is extremely
non-complex.
\end{theorem}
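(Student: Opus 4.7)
The plan is to reduce the claim to a direct application of Lemma~\ref{lemma-weakmultiplication} to the operator $T^2$. Fix an arbitrary $T\in L(C(K))$. Since every operator on $C(K)$ is a weak multiplication, we can write
\[
T = h\,\Id + R
\]
with $h\in C(K)$ and $R\in W(C(K))$. The hypothesis also forces $T^2$ itself to have the weak-multiplication form $g\,\Id+S$ with $g\in C(K)$, $S\in W(C(K))$; the crucial point is that $g$ is non-negative, and I would obtain this by squaring the decomposition of $T$ directly rather than invoking uniqueness.

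Expanding,
\[
T^2 = h^2\,\Id \;+\; (h\,\Id)\circ R \;+\; R\circ (h\,\Id) \;+\; R^2.
\]
The multiplicator part is $g:=h^2\in C(K)$, which is non-negative pointwise on $K$. The remaining summand $S:=(h\,\Id)\circ R+R\circ (h\,\Id)+R^2$ belongs to $W(C(K))$ because $W(C(K))$ is a two-sided ideal of $L(C(K))$ (composition of a bounded operator with a weakly compact one is weakly compact). Thus $T^2=g\,\Id+S$ fits the hypothesis of Lemma~\ref{lemma-weakmultiplication} exactly, so $T^2$ satisfies the Daugavet equation:
\[
\|\Id + T^2\| = 1 + \|T^2\|.
\]
Since this holds for every $T\in L(C(K))$, the space $C(K)$ is extremely non-complex.

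There is essentially no obstacle here: the only thing to verify beyond the lemma is that squaring a weak multiplication yields a weak multiplication whose continuous-function part is a perfect square (hence non-negative). The ideal property of $W(C(K))$ handles the error term automatically, and perfectness of $K$ is used only inside the lemma (through Remark~\ref{remark}.b).
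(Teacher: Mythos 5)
Your proof is correct and follows the same route as the paper: the paper also notes that for every $T\in L(C(K))$ one has $T^2=g\,\Id+S$ with $g\in C(K)$ non-negative and $S$ weakly compact, and then applies Lemma~\ref{lemma-weakmultiplication}; you merely spell out the squaring $T=h\,\Id+R$ and the ideal property of $W(C(K))$, which is exactly the implicit justification. Nothing is missing.
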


As we commented in the introduction, there are (even in ZFC) perfect
compact spaces whose operators are weak multiplications
\cite{Plebanek}. Therefore, the above result really gives the
existence of extremely non-complex infinite-dimensional Banach
spaces.

\begin{corollary}\label{cor-exist-extremelync}
There exist infinite-dimensional extremely non-complex
Banach spaces.
\end{corollary}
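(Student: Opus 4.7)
The plan is essentially a one-line assembly argument, since all the hard work has already been packaged into the preceding theorem and into the existence results of \cite{Plebanek} and \cite{Koszmider}. My strategy is to exhibit a single infinite-dimensional Banach space that satisfies the hypothesis of the theorem, and then invoke the theorem to conclude that it is extremely non-complex.

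First I would select, from the list (1)--(3) of $C(K)$ spaces with few operators discussed in the introduction, a construction that produces a \emph{perfect} compact space $K$ on which every operator on $C(K)$ is a weak multiplication (not merely a weak multiplier). This is precisely what type (3), the Plebanek construction in \cite{Plebanek}, delivers in ZFC: a connected (hence perfect) compact $K$ such that every $T \in L(C(K))$ has the form $T = g\Id + S$ with $g \in C(K)$ and $S \in W(C(K))$. Since $K$ is perfect, it is infinite, so $C(K)$ is infinite-dimensional.

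Next, with such a $K$ in hand, the previous theorem applies verbatim: for every $T \in L(C(K))$, the square $T^2$ is again a weak multiplication (weak multiplications form an algebra, with the $g$-coefficient of $T^2$ equal to $g^2 \geq 0$ and the weakly compact remainder being $gS + Sg\Id + S^2$ up to a computation), so $T^2 = h \Id + R$ for some non-negative $h \in C(K)$ and some $R \in W(C(K))$. By Lemma~\ref{lemma-weakmultiplication} the operator $\Id + T^2$ satisfies the Daugavet equation, which is exactly the extremely non-complex condition $\|\Id + T^2\| = 1 + \|T^2\|$. Thus $C(K)$ is an infinite-dimensional extremely non-complex Banach space, proving the corollary.

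There is no real obstacle at this stage; the substantive content lies outside this corollary, namely in Lemma~\ref{lemma-weakmultiplication} (already proved above) and in the existence of the compact space $K$ (referenced from \cite{Plebanek}). The only point that deserves a brief verification is that the square of a weak multiplication is again a weak multiplication with non-negative scalar part, which is immediate from $(g\Id + S)^2 = g^2 \Id + (gS + Sg\Id + S^2)$ together with the fact that $W(C(K))$ is a two-sided ideal in $L(C(K))$.
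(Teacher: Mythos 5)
Your proposal is correct and follows the paper's own route: invoke the theorem that $C(K)$ is extremely non-complex whenever $K$ is perfect and all operators on $C(K)$ are weak multiplications (via the observation that $(g\Id+S)^2=g^2\Id+S'$ with $g^2\geq 0$ and $S'$ weakly compact, plus Lemma~\ref{lemma-weakmultiplication}), and feed it the connected, hence perfect, compact space of \cite{Plebanek}, exactly as the paper does. The only cosmetic slip is saying that ``$\Id+T^2$ satisfies the Daugavet equation''; it is $T^2$ that satisfies it, which is the displayed equality $\|\Id+T^2\|=1+\|T^2\|$ you state anyway.
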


Let us finish the section showing that our requirement for the
compact space to be perfect is not only methodological.

\begin{remark}
{\slshape Let $K$ be a compact space with at least two points.
If $C(K)$ is extremely non-complex, then $K$ is perfect.\ }
\begin{proof}
If there exists an isolated point $t_0\in K$, we write
$K'=K\setminus \{t_0\}$, we take $t_1\in K'$, and define $T\in
L(C(K))$ by
$$
\bigl[T(f)\bigr]=\bigl(3 f(t_1) - f(t_0)\bigr)\,
\bigl(2\chi_{\{t_0\}} + \chi_{K'}\bigr) \qquad \bigl(f\in C(K)\bigr).
$$
It is readily checked that $T^2=T$, $\|T\|=8$, and $\|\Id + T\| \leq
7 < 1 + \|T\|$.
\end{proof}
\end{remark}

\section{More examples: weak multipliers} \label{sec:weakmultipliers}
Our aim in this section is to enlarge the class of extremely
non-complex Banach spaces by adding those $C(K)$ spaces with perfect
$K$ for which all operators are weak multipliers. Let us first fix
some notation and preliminary results.

Given a compact space $K$, by the Riesz representation theorem, the
dual of the Banach space $C(K)$ is isometric to the space $M(K)$ of
Radon measures on $K$, i.e.\ signed, Borel, scalar-valued, countably
additive and regular measures. More precisely, given $\mu\in M(K)$
and $f\in C(K)$ the duality is given by
$$
\mu(f)=\int f d\mu.
$$
We introduce one more ingredient which will play a crucial role
in our arguments of this section. Given an operator $U\in
L\big(M(K)\big)$, we consider an associated function
$g_U:K\longrightarrow [-\|U\|,\|U\|]$ given by
$$
g_U(x)=U(\delta_x)(\{x\})\qquad (x\in K).
$$
This obviously extends to operators on $C(K)$ by passing to the
adjoint, that is, for $T\in L\big(C(K)\big)$ one can consider
$g_{T^*}:K\longrightarrow [-\|T\|,\|T\|]$. This tool was used in
\cite{Wer0} under the name ``stochastic kernel'' to give an
elementary approach to the Daugavet equation on $C(K)$ spaces. One
of the results in the aforementioned paper, which we state for the
convenience of the reader, will be useful in the remainder of our
exposition.

\begin{lemma}\cite[Lemma~3]{Wer0}\label{Werner-theorem}
Let $K$ be a compact space and $T\in L\big(C(K)\big)$. If the
set $\{x\in K \ : \ g_{T^*}(x)\geq 0\}$ is dense in $K$, then
$T$ satisfies the Daugavet equation.
\end{lemma}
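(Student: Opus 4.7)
The plan is to establish the nontrivial inequality $\|\Id + T\| \geq 1 + \|T\|$ by producing, for each $\eps > 0$, a point $x_0 \in K$ and a unit $f \in C(K)$ such that $[(\Id+T)f](x_0)$ is within $O(\eps)$ of $1 + \|T\|$. The point $x_0$ will have to simultaneously (i) nearly attain the supremum of the map $x \mapsto \|T^*\delta_x\|$ on $K$, and (ii) belong to the hypothesised dense set, so that $g_{T^*}(x_0) \geq 0$. The function $f$ will then be produced by Urysohn's lemma as a continuous $\pm 1$-approximation to the sign of the Jordan decomposition of $\mu := T^*\delta_{x_0}$, pinned to take the value $+1$ at $x_0$.

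The first step is to observe that $x \mapsto T^*\delta_x$ is weak-$*$ continuous from $K$ to $M(K)$ and that the norm on $M(K)$ is weak-$*$ lower semicontinuous, so $\varphi(x) := \|T^*\delta_x\|$ is lower semicontinuous on $K$. Since $\sup_K \varphi = \|T^*\| = \|T\|$, the set $U := \{x \in K : \varphi(x) > \|T\| - \eps\}$ is open and nonempty; by the density hypothesis I can pick $x_0 \in U$ with $\mu(\{x_0\}) = g_{T^*}(x_0) \geq 0$.

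Next, I would write the Jordan decomposition $\mu = \mu^+ - \mu^-$, carried by disjoint Borel sets $A^+, A^-$. Mutual singularity combined with $\mu(\{x_0\}) \geq 0$ forces $\mu^-(\{x_0\}) = 0$. By regularity, for each $\eta > 0$ I can pick compact sets $P \subseteq A^+$ and $N \subseteq A^- \setminus \{x_0\}$ with $\mu^+(K \setminus P) < \eta$ and $\mu^-(K \setminus N) < \eta$. Then $P \cup \{x_0\}$ and $N$ are disjoint closed sets, so Urysohn's lemma provides $f \in C(K)$ with $\|f\| \leq 1$, $f \equiv 1$ on $P \cup \{x_0\}$, and $f \equiv -1$ on $N$. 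A routine splitting of the integral, using $|f| \leq 1$ off $P \cup N$, yields $\int f\, d\mu \geq \mu^+(K) + \mu^-(K) - 4\eta = \|\mu\| - 4\eta$, whence
$$[(\Id + T)f](x_0) = f(x_0) + (T^*\delta_{x_0})(f) = 1 + \int f\,d\mu \geq 1 + \|\mu\| - 4\eta > 1 + \|T\| - \eps - 4\eta.$$
Letting $\eta \to 0^+$ and then $\eps \to 0^+$ gives the Daugavet equation.

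The crux of the argument, and where the hypothesis is spent, is the inference $\mu^-(\{x_0\}) = 0$: this is exactly what allows $f$ to take the value $+1$ at $x_0$ while still being $-1$ on nearly all of the mass of $\mu^-$. Without the sign condition at $x_0$, one would be forced to set $f(x_0) = \mathrm{sgn}(\mu(\{x_0\}))$, producing a cancellation in the estimate that could prevent reaching $1 + \|T\|$. The lower semicontinuity of $\varphi$ is what makes mere density of $\{g_{T^*} \geq 0\}$ (rather than a stronger topological condition) sufficient to pick $x_0$ with both required properties.
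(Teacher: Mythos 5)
Your proof is correct and complete: the lower semicontinuity of $x\mapsto\|T^*\delta_x\|$ lets you intersect the open set $\{\varphi>\|T\|-\eps\}$ with the dense set $\{g_{T^*}\geq 0\}$, and the Hahn decomposition plus regularity plus Urysohn construction of $f$ with $f(x_0)=1$ and $\int f\,d\mu\geq\|\mu\|-4\eta$ works exactly because $\mu(\{x_0\})\geq 0$ forces $\mu^-(\{x_0\})=0$. The paper itself gives no proof of this lemma (it is quoted as \cite[Lemma~3]{Wer0}), and your argument is essentially Werner's original elementary one, so there is nothing to add.
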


The next result tells us that for weakly compact operators on $M(K)$
the associated functions take ``few'' values.

\begin{lemma}\label{lemma-weak-compact-oper-M(K)}
Let $K$ be a compact space and $U\in W\big(M(K)\big)$. Then,
for every $\varepsilon>0$ the set $\{x\in K \ : \
|g_U(x)|>\varepsilon\}$ is finite.
\end{lemma}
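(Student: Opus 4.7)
The plan is to argue by contradiction. Suppose, for some $\varepsilon>0$, that the set $A=\{x\in K:|g_U(x)|>\varepsilon\}$ is infinite, and fix a sequence $(x_n)_{n\in\N}$ of pairwise distinct points in $A$. The goal is to extract a strong enough convergence from $(U\delta_{x_n})$ to force $g_U(x_n)\to 0$, giving a contradiction.

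The crucial observation is that since the $x_n$ are pairwise distinct, the assignment $(a_n)\mapsto\sum_n a_n\,\delta_{x_n}$ defines an isometric linear embedding of $\ell_1$ into $M(K)$. Restricting $U$ to the range of this embedding gives a weakly compact operator from a subspace isomorphic to $\ell_1$ into $M(K)$. Since $\ell_1$ has the Schur property, every weakly compact operator out of $\ell_1$ into an arbitrary Banach space is automatically compact. Therefore one can pass to a subsequence (still denoted $(x_n)$) such that $U\delta_{x_n}\to\mu$ in the norm of $M(K)$ for some $\mu\in M(K)$.

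At this point I would use the fact that any single Radon measure has at most countably many atoms, with summable masses: $\sum_n|\mu(\{x_n\})|\leq\|\mu\|<\infty$, whence $\mu(\{x_n\})\to 0$. Combining this with the elementary inequality
$$|U\delta_{x_n}(\{x_n\})-\mu(\{x_n\})|\leq |U\delta_{x_n}-\mu|(\{x_n\})\leq \|U\delta_{x_n}-\mu\|,$$
one obtains $g_U(x_n)=U\delta_{x_n}(\{x_n\})\to 0$, contradicting $|g_U(x_n)|>\varepsilon$ for every $n$.

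The only point that is not essentially mechanical is the upgrade from weak compactness of $U$ to \emph{norm} convergence along a subsequence of $(U\delta_{x_n})$; this is precisely where the $\ell_1$/Schur reduction enters. As an alternative to that step one could invoke Grothendieck's characterization of relatively weakly compact subsets of $M(K)$: any such set is uniformly strongly additive, so for the pairwise disjoint Borel family $(\{x_k\})_{k}$ one has $\sup_n|U\delta_{x_n}(\{x_k\})|\to 0$ as $k\to\infty$, which yields the contradiction directly by specializing to $n=k$.
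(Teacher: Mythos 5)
The main route you propose breaks down at exactly the step you single out as crucial. The claim ``since $\ell_1$ has the Schur property, every weakly compact operator out of $\ell_1$ into an arbitrary Banach space is automatically compact'' is false: the Schur property gives the opposite implication, namely that weakly compact operators \emph{into} a Schur space are compact (equivalently, relatively weakly compact subsets \emph{of} $\ell_1$ are norm compact). For operators \emph{out of} $\ell_1$ the statement fails: the formal inclusion $\ell_1\hookrightarrow\ell_2$ is weakly compact (its range lies in a reflexive space) but carries the unit vector basis to a sequence with no norm-convergent subsequence. Nor does the special target $M(K)$ save you: the operator $\ell_1\to L_1[0,1]\subset M([0,1])$ sending $e_n$ to the $n$-th Rademacher function is weakly compact but not compact. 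So you cannot extract a norm-convergent subsequence of $(U\delta_{x_n})$ this way, and the subsequent (otherwise correct) computation with $\mu(\{x_n\})\to 0$ has nothing to feed on. Here weak compactness of $U$ only gives that $\{U\delta_{x_n}\}$ is relatively \emph{weakly} compact in $M(K)$, and weak convergence of a subsequence would not let you pass to the values on the moving singletons $\{x_n\}$, since evaluation at a single point is not weak-continuous on $M(K)$.

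Your alternative in the last paragraph, by contrast, is correct and is in substance the paper's own proof. Relative weak compactness of a bounded subset of $M(K)$ is equivalent to uniform strong (countable) additivity, so for the pairwise disjoint Borel sets $\{x_k\}$ one gets $\sup_n|U\delta_{x_n}(\{x_k\})|\to 0$ as $k\to\infty$, and taking $n=k$ contradicts $|g_U(x_k)|>\varepsilon$. The only difference from the paper is bookkeeping: the paper quotes the Dieudonn\'{e}--Grothendieck theorem in its ``pairwise disjoint \emph{open} sets'' form (Diestel, VII.14), and therefore first uses regularity of the measures $U\delta_{x_n}$ to replace the singletons by pairwise disjoint open sets $V_n$ with $|U\delta_{x_n}(V_n)|>\varepsilon/2$, while you invoke the Borel-set (uniform strong additivity) form directly on the singletons, which is legitimate provided you cite that version of the equivalence (e.g.\ Diestel--Uhl) rather than the open-set one. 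So the fix is simple: discard the $\ell_1$/Schur reduction and promote your ``alternative'' to the actual proof, supplying either the regularity step or the appropriate form of the weak compactness criterion.
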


\begin{proof}
If for some $\eps>0$ the set $\{x\in K \ : \ |g_U(x)|>\eps\}$ is
infinite, then there is an infinite sequence $\{x_n\}_{n\in\N}$ of
different points in $K$ satisfying
$$
g_U(x_n)=U(\delta_{x_n})(\{x_n\})>\eps
$$
for every $n\in\N$. By using the regularity of the measures
$\{U(\delta_n)\}_{n\in\N}$ and passing to a subsequence of
$\{x_n\}_{n\in\N}$ if necessary, we can find a family of pairwise
disjoint open sets $\{V_n\}_{n\in\N}$ such that $x_n\in V_n$ and
$$
|U(\delta_{x_n})(V_n\setminus \{x_n\})|<\frac{\eps}{2}
$$
for every $n\in\N$, which implies
$$
|U(\delta_{x_n})(V_n)|>\frac{\eps}{2}\qquad (n\in\N).
$$
This, together with the Dieudonn\'{e}-Grothendieck Theorem (see
\cite[VII.14]{Die}), tells us that the sequence of measures
$\{U(\delta_{x_n})\}_{n\in\N}$ is not relatively weakly compact and,
therefore, that $U$ is not weakly compact.
\end{proof}

As an immediate consequence we get the following corollary.

\begin{corollary}\label{cor-weak-compact-countable}
Let $K$ be a compact space and $U\in W\big(M(K)\big)$. Then,
the set $\{x\in K \ : \ g_{U}(x)\neq0\}$ is at most countable.
\end{corollary}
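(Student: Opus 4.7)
The plan is to derive this corollary as an immediate consequence of Lemma~\ref{lemma-weak-compact-oper-M(K)}. The key observation is that a point $x \in K$ satisfies $g_U(x) \neq 0$ if and only if $|g_U(x)| > 1/n$ for some $n \in \N$.

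First I would write
\begin{equation*}
\{x \in K \ : \ g_U(x) \neq 0\} = \bigcup_{n \in \N} \left\{x \in K \ : \ |g_U(x)| > \tfrac{1}{n}\right\}.
\end{equation*}
Then, by Lemma~\ref{lemma-weak-compact-oper-M(K)} applied with $\eps = 1/n$, each set on the right-hand side is finite. Therefore the set on the left-hand side is a countable union of finite sets, hence at most countable.

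There is essentially no obstacle here; the statement is a routine consequence of the preceding lemma, and the proof is a single line once the decomposition above is noted. The real work was in establishing Lemma~\ref{lemma-weak-compact-oper-M(K)} via the Dieudonn\'e--Grothendieck theorem.
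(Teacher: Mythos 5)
Your proof is correct and is precisely the argument the paper intends when it calls the corollary an immediate consequence of Lemma~\ref{lemma-weak-compact-oper-M(K)}: writing the set as the countable union over $n\in\N$ of the finite sets $\{x\in K : |g_U(x)|>1/n\}$. Nothing further is needed.
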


\begin{theorem}\label{thm-weak-mult-perfect}
Let $K$ be a perfect compact space and $T\in L\big(C(K)\big)$
an operator such that $T^*=g\Id+S$ where $S\in W\big(M(K)\big)$
and $g$ is a Borel function satisfying $g\geq 0$. Then, the set
$$
\{x\in K \ : \ T^*(\delta_x)(\{x\})\geq 0\}
$$
is dense in $K$ and, therefore, $T$ satisfies the Daugavet
equation.
\end{theorem}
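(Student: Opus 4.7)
The strategy is simply to combine the definition of $g_S$ with the countability statement in Corollary~\ref{cor-weak-compact-countable}, and then to invoke Lemma~\ref{Werner-theorem}. First, for every $x\in K$, multiplication of $\delta_x$ by the Borel function $g$ satisfies $g\cdot\delta_x=g(x)\,\delta_x$, so the hypothesis $T^{*}=g\,\Id+S$ gives
\begin{equation*}
T^{*}(\delta_x)(\{x\})=g(x)+S(\delta_x)(\{x\})=g(x)+g_S(x),
\end{equation*}
where $g_S$ is the function associated to $S\in W(M(K))$ that was introduced before Lemma~\ref{Werner-theorem}.

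Next I would apply Corollary~\ref{cor-weak-compact-countable} with $U=S$, obtaining that the set $A:=\{x\in K : g_S(x)\neq 0\}$ is at most countable. For every $x\in K\setminus A$ the displayed equality reduces to $T^{*}(\delta_x)(\{x\})=g(x)\geq 0$, since $g\geq 0$ by hypothesis. Hence
\begin{equation*}
K\setminus A\subseteq\{x\in K : T^{*}(\delta_x)(\{x\})\geq 0\}.
\end{equation*}

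The remaining step is to conclude that $K\setminus A$ is dense in $K$. Here the perfectness of $K$ enters: any nonempty open subset $U$ of the compact Hausdorff space $K$ is itself a nonempty locally compact Hausdorff space without isolated points, hence uncountable by the Baire category theorem (otherwise $U$ would be a countable union of nowhere dense singletons). Therefore no countable set meets every nonempty open subset of $K$, so $K\setminus A$, and a fortiori $\{x\in K : T^{*}(\delta_x)(\{x\})\geq 0\}$, is dense in $K$. Lemma~\ref{Werner-theorem} then yields that $T$ satisfies the Daugavet equation.

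I do not foresee any genuine obstacle: once the pointwise identity $T^{*}(\delta_x)(\{x\})=g(x)+g_S(x)$ is written down, the argument is a clean unwinding of the preliminary results. The only mildly delicate point is the uncountability of nonempty open subsets of a perfect compact Hausdorff space, which is exactly where the hypothesis on $K$ is used.
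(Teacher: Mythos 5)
Your argument is correct and essentially the same as the paper's: you use the pointwise identity $T^{*}(\delta_x)(\{x\})=g(x)+g_S(x)$, Corollary~\ref{cor-weak-compact-countable}, the uncountability of nonempty open subsets of a perfect compact space, and Lemma~\ref{Werner-theorem}, exactly as in the printed proof (which only differs in proving the uncountability via scatteredness of countable compact spaces instead of your Baire category argument). One slip of wording: ``no countable set meets every nonempty open subset of $K$'' should be ``no nonempty open subset of $K$ is contained in a countable set'' --- that is what your Baire argument gives and all that the density of $K\setminus A$ requires.
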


\begin{proof} It is clear that for every $x\in K$ one has
$T^*(\delta_x)(\{x\})\geq 0$ provided that $S(\delta_x)(\{x\})=0$
and, therefore,
$$
\{x\in K \ : \ T^*(\delta_x)(\{x\})\geq 0\}\supset\{x\in K \ : \
S(\delta_x)(\{x\})=0\}.
$$
Now, we observe that the last set is dense in $K$ by
Corollary~\ref{cor-weak-compact-countable} and the fact that
nonempty open sets in perfect compact spaces are uncountable (if
$U\subset K$ is open and countable we can find an open set $V$
satisfying $\overline{V}\subset U$ and, therefore, $\overline{V}$ is
a countable compact space and thus scattered by
\cite[Proposition~8.5.7]{Sem}. So $V$ has an isolated point which is also
isolated in $K$ since $V$ is open, contradicting the perfectness of
$K$). Finally, the proof concludes with the use of
Lemma~\ref{Werner-theorem}.
\end{proof}

We can now state and prove the main result of the section.

\begin{theorem}
Let $K$ be a perfect compact space so that every operator on
$C(K)$ is a weak multiplier. Then, $C(K)$ is extremely
non-complex.
\end{theorem}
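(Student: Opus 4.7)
The plan is to prove the stronger statement that for every $T\in L(C(K))$ the operator $T^2$ itself satisfies the Daugavet equation
$\|\Id + T^2\| = 1 + \|T^2\|$; this is exactly the extremely non-complex identity. I will aim to apply Theorem~\ref{thm-weak-mult-perfect} not to $T$, but to $T^2$, exploiting the fact that squaring a weak-multiplier decomposition $T^* = g\,\Id + S$ automatically makes the ``multiplication part'' non-negative.

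Fix $T\in L(C(K))$. By hypothesis $T$ is a weak multiplier, so $T^* = g\,\Id + S$ with $g$ integrable with respect to every Radon measure on $K$ and $S\in W(M(K))$. A preliminary observation is that $g\,\Id = T^* - S$ is a bounded operator on $M(K)$, hence $g$ is in fact bounded. Squaring the decomposition gives
$$
(T^2)^* = (T^*)^2 = g^2\,\Id + \bigl[(g\,\Id) S + S(g\,\Id) + S^2\bigr].
$$
Because $W(M(K))$ is a closed two-sided ideal in $L(M(K))$, the bracketed operator is weakly compact; call it $\widetilde S$. Thus $(T^2)^* = g^2\,\Id + \widetilde S$ with $g^2\geq 0$ pointwise and $\widetilde S\in W(M(K))$.

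The conclusion then follows by a direct appeal to Theorem~\ref{thm-weak-mult-perfect} applied to $T^2$: since $g^2(x)\geq 0$ for every $x\in K$, the set $\{x\in K : (T^2)^*(\delta_x)(\{x\})\geq 0\}$ contains $\{x\in K : \widetilde S(\delta_x)(\{x\})=0\}$, which is dense in $K$ by Corollary~\ref{cor-weak-compact-countable} together with the perfectness of $K$. Hence $T^2$ satisfies the Daugavet equation, as required.

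The only point requiring attention is the algebraic bookkeeping when squaring $g\,\Id + S$: one must check that $g\,\Id$ is well defined as a bounded operator on $M(K)$ and that weak compactness is preserved under composition with it, both of which are routine once the problem is transported to the dual side. Concerning measurability, $g^2$ inherits everything from $g$, and the proof of Theorem~\ref{thm-weak-mult-perfect} really only uses the pointwise inequality $g\geq 0$ at Dirac measures, so the exact regularity class of the multiplier function is not an obstacle.
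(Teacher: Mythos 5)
Your argument is correct and coincides with the paper's own proof: square the weak-multiplier decomposition $T^*=g\,\Id+S$ to get $(T^2)^*=g^2\,\Id+\widetilde S$ with $g^2\geq 0$ and $\widetilde S$ weakly compact (using that $W(M(K))$ is an ideal), then invoke Theorem~\ref{thm-weak-mult-perfect}. The extra remarks on the boundedness and measurability of $g$ are fine and only make explicit what the paper leaves implicit.
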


\begin{proof} Given $T\in L\big(C(K)\big)$, there exist a bounded
Borel function $g$ and $S\in W\big(M(K)\big)$ with
$T^*=g\Id+S$, so
$$
(T^2)^*=(T^*)^2=(g\Id+S)^2=g^2\Id+S'
$$
where $S'$ is weakly compact, and we use the previous theorem.
\end{proof}

As we commented in the introduction, there are infinitely many
nonisomorphic spaces $C(K)$ on which every operator is a weak
multiplier, providing infinitely many nonisomorphic extremely
non-complex Banach spaces.

\begin{corollary}
There exist infinitely many nonisomorphic infinite-dimensional
extremely non-complex Banach spaces.
\end{corollary}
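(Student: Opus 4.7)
The plan is to obtain the corollary as a direct consequence of the preceding theorem combined with known constructions of $C(K)$ spaces with few operators in the sense of Definition~\ref{def-weak-multiplication-weak-multiplier}. The preceding theorem says that whenever $K$ is a perfect compact space such that every operator on $C(K)$ is a weak multiplier, $C(K)$ is extremely non-complex. Hence everything reduces to exhibiting infinitely many pairwise nonisomorphic compact spaces in the class
\[
\mathcal{K}=\{K\text{ perfect compact}\ : \ \text{every }T\in L(C(K))\text{ is a weak multiplier}\}.
\]

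To produce elements of $\mathcal{K}$, I would rely on the three families discussed in the introduction. First, the type~(2) compact spaces of \cite{Koszmider}, where $K\setminus F$ is connected for every finite $F\subseteq K$: such a $K$ is automatically perfect, and by construction every operator on $C(K)$ is a weak multiplier. Second, the connected compact spaces of \cite{Plebanek}, which are perfect (connected and infinite) and whose $C(K)$ has only weak multiplications, in particular only weak multipliers. Third, the perfect totally disconnected compacta that will be produced in section~\ref{sec:further-examples} by modifying the construction of \cite{Koszmider}. Each of these is in $\mathcal{K}$, so the previous theorem furnishes three extremely non-complex Banach spaces.

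To upgrade these three types to infinitely many nonisomorphic examples, I would first note that they are already pairwise nonisomorphic for coarse structural reasons: the type~(2) spaces are indecomposable, the \cite{Plebanek} spaces do not embed into $\ell_\infty$ (so they are distinguished from the other two, which do), and the type from section~\ref{sec:further-examples} is simultaneously totally disconnected and perfect, unlike type~(2). To pass from three to infinitely many, I would appeal to the well-known flexibility of the Koszmider/Plebanek constructions: the inductive schemes in \cite{Koszmider} and \cite{Plebanek} depend on parameters (typically on almost disjoint families or similar combinatorial data) that can be varied to produce a large family of pairwise nonisomorphic $C(K)$ with $K\in\mathcal{K}$, and in fact $2^{\mathfrak{c}}$ many within a single type. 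Any countably infinite subfamily of these, still inside $\mathcal{K}$, yields the desired infinitely many nonisomorphic extremely non-complex Banach spaces via the previous theorem.

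There is no genuinely hard step here; the only thing to check is that each of the cited constructions indeed produces an element of $\mathcal{K}$, i.e.\ that $K$ is perfect and that every operator on $C(K)$ is a weak multiplier. For types~(2) and~(3) this is explicit in \cite{Koszmider} and \cite{Plebanek}, and for the new perfect totally disconnected type it will be verified in section~\ref{sec:further-examples}. The mildest point requiring a remark is that the examples of \cite{Plebanek} satisfy the formally stronger condition of being weak multiplications, which trivially implies weak multiplier; once this is observed, the corollary is immediate.
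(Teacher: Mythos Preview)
Your proposal is correct and follows essentially the same approach as the paper: apply the preceding theorem to the perfect compacta with few operators discussed in the introduction and in \cite{Koszmider,Plebanek}. The paper's own justification is even more terse---it consists solely of the sentence preceding the corollary, referring back to the introduction---whereas you spell out which of the types are actually perfect and gesture at why the constructions yield infinitely many pairwise nonisomorphic examples.
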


\begin{remark}\label{remark:Schlackow}
It is clear that being extremely non-complex is not an
isomorphic property. This is especially clear for spaces of
continuous functions, since when a compact space $K$ is
infinite, then $C(K)$ is isomorphic to $C(K')$ where $K'$ has
an isolated point ($K'$ is $K$ with two points identified and
one external point added). Anyhow, I.~Schlackow has proved very
recently \cite[Theorem 4.12.]{Schl} that when $K$ and $L$ are
perfect compact spaces, $C(K)$ and $C(L)$ are isomorphic and
every operator on $C(K)$ is a weak multiplier, then so does
every operator on $C(L)$. We will see in the next section (see
Remark~\ref{remark:extremely non-complex}) that the property of
all operators being weak multipliers cannot be replaced by the
property of $C(K)$ being extremely non-complex.
\end{remark}

\section{Further examples}\label{sec:further-examples}
In this section we will construct more examples of extremely
non-complex spaces in the family of $C(K)$ spaces. All the examples
that we have exhibited so far share the characteristic of having few
operators, thus it is natural to ask whether the property of being
an extremely non-complex space is related to the ``lack'' of
operators. This section is devoted to answer this question by
presenting extremely non-complex $C(K)$ spaces with many operators
that are not weak multipliers. To construct such spaces we will
consider a classical chain of inter-related structures: a Boolean
algebra $\mathcal{A}$, its Stone space $K$, the Banach space $C(K)$
and its dual $M(K)$. Let us fix some notation, terminology, and
standard facts related to these structures.

Given a compact space $K$, the clopen subsets of $K$ form a Boolean
algebra which will be denoted by $\Clop(K)$. It is well known that a
compact space is totally disconnected if and only if it is
zero-dimensional i.e., it has a basis of topology consisting of
clopen sets \cite[Theorem~7.5]{Kop}. One can also recover all
totally disconnected compact spaces as the Stone spaces of abstract
Boolean algebras \cite[Theorem~7.10]{Kop}. If $\mathcal{A}$ is a
Boolean algebra the Stone space $K$ of $\mathcal{A}$ is the set of
all ultrafilters of $\mathcal{A}$ endowed with the topology in which
the basic sets are defined as $[A]=\{u\in K\ : \ A\in u \}$ for any
$A\in \mathcal{A}$. By the Stone duality (see
\cite[Theorem~8.2]{Kop}) epimorphisms of Boolean algebras correspond
to monomorphims of their Stone spaces which in turn correspond to
epimorphisms of their Banach spaces of continuous functions. In
particular, we will be interested in homomorphisms of Boolean
algebras which are the identity on their images. For background on
Boolean algebras and Stone spaces we refer the reader to
\cite{Hal,Kop,Sem,Sik}.

\begin{fact}\label{fact-Boolean-projections}
{\slshape Let $P:\Clop(K)\longrightarrow \Clop(K)$ be a homomorphism
of Boolean algebras which is the identity on its image, then the
mapping $\widetilde{P}: C(K)\longrightarrow C(K)$ given by
$$
\widetilde{P}(\chi_A)=\chi_{P(A)} \qquad (A\in\Clop(K))
$$
is a norm-one projection on $C(K)$ (it extends to $C(K)$ by
linearity and the Stone-Weierstrass Theorem) whose image is
isometric to $C(L)$ where $L$ is the Stone space of the Boolean
algebra $P(\Clop(K))$.}
\end{fact}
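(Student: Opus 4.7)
The plan is to verify the three assertions packaged in the fact—that $\widetilde P$ extends from clopen indicators to a bounded linear operator on $C(K)$, that this extension is a norm-one projection, and that its image is isometric to $C(L)$—leaning throughout on Stone duality and on the fact that the linear span $\mathcal S$ of $\{\chi_A : A \in \Clop(K)\}$ is uniformly dense in $C(K)$ by the Stone--Weierstrass theorem.

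First I would define $\widetilde P$ on $\mathcal S$ as follows: any $f \in \mathcal S$ may be written $f = \sum_{i=1}^n a_i \chi_{A_i}$ with $A_1, \dots, A_n \in \Clop(K)$ pairwise disjoint, and I set $\widetilde P(f) := \sum_{i=1}^n a_i \chi_{P(A_i)}$. That this is independent of the chosen partition reduces, by taking common refinements, to the identity $P(B_1 \sqcup \cdots \sqcup B_k) = P(B_1) \sqcup \cdots \sqcup P(B_k)$, which is immediate from $P$ being a Boolean homomorphism; linearity on $\mathcal S$ follows likewise. The bound $\|\widetilde P(f)\|_\infty \leq \|f\|_\infty$ is automatic once one observes that the $P(A_i)$ are pairwise disjoint as well, so $\widetilde P(f)$ takes values in $\{0, a_1, \dots, a_n\}$. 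Hence $\widetilde P$ extends uniquely to a contraction on $C(K)$.

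For the projection property, the identity-on-image hypothesis gives $\widetilde P(\chi_{P(A)}) = \chi_{P(P(A))} = \chi_{P(A)}$, so $\widetilde P^2 = \widetilde P$ on $\mathcal S$ and hence on all of $C(K)$ by continuity. Boolean homomorphisms preserve the top element (from $1 = P(A) \vee P(A)^c = P(A \vee A^c) = P(1)$), so $\widetilde P(\chi_K) = \chi_K$, and therefore $\|\widetilde P\| = 1$.

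To identify the image with $C(L)$, I would invoke Stone duality: the inclusion of subalgebras $P(\Clop(K)) \hookrightarrow \Clop(K)$ corresponds to a continuous surjection $\pi \colon K \longrightarrow L$, where $L$ is the Stone space of $P(\Clop(K))$. The pullback $\pi^* \colon C(L) \longrightarrow C(K)$, $f \mapsto f \circ \pi$, is an isometric embedding (because $\pi$ is surjective), carrying $\chi_\beta$ for $\beta \in \Clop(L)$ to $\chi_B$ where $B \in P(\Clop(K))$ is the clopen of $K$ corresponding to $\beta$ under the duality. Since the closed linear span of such indicators is all of $C(L)$ on the $L$-side (Stone--Weierstrass on $L$) and coincides with the image of $\widetilde P$ on the $K$-side, the two subspaces agree isometrically, which finishes the proof. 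The only step requiring genuine bookkeeping beyond formal verification from the Boolean homomorphism axioms is this last identification, where Stone duality is indispensable.
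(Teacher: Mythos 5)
Your proposal is correct and follows essentially the same route as the paper: both define $\widetilde P$ on disjoint combinations $\sum_i a_i\chi_{A_i}$, use Stone--Weierstrass for density, get $\widetilde P^2=\widetilde P$ from $P^2=P$, and get contractivity from $P$ preserving disjointness together with the formula $\bigl\|\sum_i a_i\chi_{A_i}\bigr\|=\sup\{|a_i| : A_i\neq\varnothing\}$. The only difference is one of completeness: you also spell out well-definedness and the identification of the image with $C(L)$ via the surjection $\pi\colon K\longrightarrow L$ dual to the inclusion $P(\Clop(K))\subseteq\Clop(K)$, steps the paper leaves implicit as standard Stone duality.
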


\begin{proof}
By the Stone-Weierstrass Theorem functions of the form
$\Sigma_{i\leq n} a_i\chi_{A_i}$ span a dense subspace of $C(K)$
where $n\in \N$, $a_i\in \R$ and $A_i\in \Clop(K)$ are pairwise
disjoint. This, together with $P^2=P$ and ${\tilde P}(\Sigma_{i\leq
n} a_i\chi_{A_i}) =\Sigma_{i\leq n} a_i\chi_{P(A_i)}$, implies that
${\tilde P}^2={\tilde P}$. On the other hand, we observe that
$$
\left\|\sum_{i\leq n} a_i\chi_{A_i}\right\|=\sup\{|a_i|: i\leq n, \
A_i\not=\varnothing\}
$$
for pairwise disjoint $A_i\in\Clop(K)$ and that $P$ as a
homomorphism of Boolean algebras preserves the disjointness, which
allow us to deduce that $\|{\tilde P}\|\leq 1$.
\end{proof}

For a totally disconnected compact space $K$, the restriction of a
Radon measure on $K$ to the Boolean algebra $\Clop(K)$ is a finitely
additive signed and bounded measure, that is, $\mu(a\vee
b)=\mu(a)+\mu(b)$ where $a,b\in \Clop(K)$ and $\vee$ denotes the
supremum in $\Clop(K)$. Conversely, any bounded finitely additive
signed measure on such a Boolean algebra extends uniquely to a Radon
measure on Borel subsets of $K$ (see \cite[\S~18.7]{Sem}, for
example). The following remark shows that pointwise convergence of
measures on the Boolean algebra gives weak$^*$ convergence of the
corresponding Radon measures.

\begin{remark}\label{remark-weak*-convergence}
Suppose that $\mu_n$,$\mu$ with $n\in\N$ are uniformly bounded Radon
measures on a totally disconnected compact space $K$ and denote
$\nu=\mu|_{\Clop(K)}$ and $\nu_n={\mu_n}|_{\Clop(K)}$ the associated
finitely additive measures on $\Clop(K)$. Then,
$\{\nu_n(A)\}_{n\in\N}$ converges to $\nu(A)$ for every clopen
subset $A\subseteq K$ if, and only if, $\{\mu_n\}_{n\in\N}$
converges weakly$^*$ to $\mu$.
\begin{proof}
It follows from  the uniform boundedness of the sequence and the
density of the span of the characteristic functions of clopen sets
in $C(K)$ which is an immediate consequence of the Stone-Weierstrass
theorem.
\end{proof}
\end{remark}

Let us present the last elements we need: given a compact space $K$
and $K_i$, $K_j$ clopen subsets of $K$, $K_{-i}$ will stand for the
set $K\setminus K_{i}$ and $P_j: C(K)\longrightarrow C(K_j)$ and
$I_i:C(K_i)\longrightarrow C(K)$ will denote the operators defined
by
$$
 P_j(h)=h|_{K_j} \quad
\big(h\in C(K)\big) \quad \text{and} \quad
\begin{cases} [I_i(f)](x)=f(x) & \text{if} \
x\in K_i \\  [I_i(f)](x)=0 & \text{if} \ x\notin K_i
\end{cases}
\quad \big(f\in C(K_i)\big)
$$
respectively. Moreover, we will also consider the operators $P^i:
M(K)\longrightarrow M(K_i)$ and $I^j: M(K_j)\longrightarrow M(K)$
given by
\begin{align*}
\big[P^i(\mu)\big](L)&=\mu(L) \qquad \big(\mu \in M(K), L\subset
K_i\big)\\ \intertext{and} \big[I^j(\nu)\big](S)&=\nu(S\cap K_j)
\qquad \big(\nu\in M(K_j), S\subset K\big).
\end{align*}
Finally, all the operators above will be also used with index $-i$.
The relationship between these operators is established in the
following easy lemma.

\begin{lemma}\label{lemma-duality-proy-injec}
Let $K$ be a compact space, let $K_i$, $K_j$ be clopen subsets of
$K$, and consider the operators $P_j$ and $I_i$ defined above. Then,
${I_i}^*=P^i$ and ${P_j}^*=I^j$. Moreover, given $g$ a Borel
function on $K$, the following holds:
$$
P^i (g\Id_{M(K)}) I^j=\begin{cases}0 & \text{if\ } K_i\cap
K_j=\varnothing\\g|_{K_i} \Id_{M(K_i)} & \text{if\ } K_i=K_j.
\end{cases}
$$
\end{lemma}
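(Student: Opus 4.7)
The proof is a straightforward computation in three parts: the two adjoint identities and the composition formula. I would proceed by direct verification using the definitions of the operators together with the $C(K)$--$M(K)$ duality $\langle h,\mu\rangle=\int_K h\,d\mu$.

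First, to show $I_i^*=P^i$, I would pick $f\in C(K_i)$ and $\mu\in M(K)$ and observe that $I_i(f)$ vanishes outside $K_i$, so
$$
\langle I_i(f),\mu\rangle=\int_K I_i(f)\,d\mu=\int_{K_i} f\,d\mu=\int_{K_i} f\,d(P^i\mu)=\langle f,P^i\mu\rangle,
$$
where the third equality uses the definition $P^i(\mu)=\mu|_{K_i}$ (which makes sense as a Borel measure on $K_i$ since $K_i$ is clopen). The identity $P_j^*=I^j$ is analogous: for $h\in C(K)$ and $\nu\in M(K_j)$, the measure $I^j\nu$ is concentrated on $K_j$, hence
$$
\langle P_j(h),\nu\rangle=\int_{K_j} h|_{K_j}\,d\nu=\int_K h\,d(I^j\nu)=\langle h, I^j\nu\rangle.
$$

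For the composition formula, I would start with $\nu\in M(K_j)$ and $L\subset K_i$ a Borel set, and chase the definitions. Applying $I^j$ turns $\nu$ into a Radon measure on $K$ supported on $K_j$; applying the multiplication operator $g\Id_{M(K)}$ yields the measure $L\mapsto \int_L g\,d(I^j\nu)=\int_{L\cap K_j} g\,d\nu$; finally restricting to $K_i$ gives
$$
\bigl[P^i(g\Id_{M(K)})I^j(\nu)\bigr](L)=\int_{L\cap K_j} g\,d\nu.
$$
If $K_i\cap K_j=\varnothing$ then $L\subset K_i$ forces $L\cap K_j=\varnothing$ and the integral is zero. If $K_i=K_j$ then $L\cap K_j=L$, and the right-hand side equals $\bigl[g|_{K_i}\Id_{M(K_i)}(\nu)\bigr](L)$.

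There is no real obstacle here; the only mild care needed is making sure the measure-theoretic manipulations (restriction to a clopen set and concentration of $I^j\nu$ on $K_j$) are invoked correctly, which is immediate because $K_i$ and $K_j$ are both clopen and hence Borel.
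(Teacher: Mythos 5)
Your proof is correct and follows essentially the same route as the paper: the adjoint identities are verified by the same one-line duality computations, and the composition formula is checked by evaluating $P^i(g\Id_{M(K)})I^j(\nu)$ on Borel sets $L\subset K_i$, reducing it to $\int_{L\cap K_j}g\,d\nu$ and splitting into the disjoint and equal cases exactly as the paper does. No gaps.
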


\begin{proof}
For $\mu\in M(K)$ and $f\in C(K_i)$ we have
$$
I^*_i(\mu)(f)=\int_K I_i(f) d\mu=\int_{K_i}fd \big(P^i(\mu)\big)
$$
which gives $I^*_i(\mu)=P^{i}(\mu)$. On the other hand, for $\nu\in
M(K_j)$ and $f\in C(K)$ we can write
$$
P^*_j(\nu)(f)=\int_{K_j} P_j(f)d\nu=\int_{K_j} (f|_{K_j}) d\nu=
\int_K fd\big(I^{j}(\nu)\big)
$$
and, therefore, $P^*_j(\nu)=I^j(\nu)$. Finally, if $K_i\cap
K_j=\varnothing$, given $\nu \in M(K_j)$ and $L\subset K_i$, we
observe that
$$\big[(g \Id_{M(K)})
I^j\big](\nu)(L)= \int_K g\chi_L dI^j(\nu)$$ so, since $I_j(\nu)$ is
a measure which assumes value zero on all sets disjoint from $K_j$,
the same is true for $g \Id_{M(K)} I^j(\nu)$, and so $\big[P^i (g
\Id) I^j\big](\nu)=0$.

If $K_i=K_j$, given $\nu \in M(K_i)$ and $L\subset K_i$, we observe
that
\begin{align*}
\big[P^i (g \Id)
I^i\big](\nu)(L)&=P^i\big[gI^i(\nu)\big](L)\\
&=\big[(g|_{K_i})I^j(\nu)\big](L)=\int g|_{K_i}\chi_L d\nu=
g|_{K_i}\Id_{M(K_i)}(\nu)(L)
\end{align*}
from which it is immediate to deduce the moreover part.
\end{proof}

In the next two results we prove the existence of a family of $C(K)$
spaces which are Grothendieck and so that there are few operators
between any pair of them. We recall that a Banach space $X$ is said
to be Grothendieck if every weak$^*$ convergent sequence in $X^*$ is
weak convergent and the fact that a $C(K)$ space in which every
operator is a weak multiplier is Grothendieck
\cite[Theorem~2.4]{Koszmider}.

\begin{prop}\label{prop-perfect-discon-few}
There is a compact infinite totally disconnected and perfect space
$K$ such that all operators on $C(K)$ are weak multipliers.
\end{prop}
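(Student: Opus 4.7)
The plan is to build, by transfinite recursion of length $\omega_1$ under the continuum hypothesis (as the introduction explicitly indicates is needed for this variant), a Boolean algebra $\mathcal{A}$ of cardinality $\mathfrak{c}$ whose Stone space $K$ is simultaneously totally disconnected, perfect, infinite, and supports only weak-multiplier operators on $C(K)$. Following the general scheme of \cite{Koszmider}, I would produce a chain $\{\mathcal{A}_\alpha\}_{\alpha<\omega_1}$ of countable subalgebras with union $\mathcal{A}$, starting from $\mathcal{A}_0=\Clop(2^\omega)$ (the countable atomless Boolean algebra, whose Stone space is the Cantor set) and taking unions at countable limits. The three topological desiderata then translate into algebraic ones: total disconnectedness is automatic for Stone spaces, perfectness is equivalent to $\mathcal{A}$ being atomless (atoms correspond bijectively to isolated points), and infiniteness is guaranteed by $\mathcal{A}_0$.

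The heart of the construction is the successor step. The scheme of \cite{Koszmider} enumerates potential \emph{obstructions} that could witness an operator on $C(K)$ failing to be a weak multiplier, each encoded as a sequence of pairwise disjoint clopen sets together with a candidate finitely additive measure on the currently built Stone space (cf.\ Remark~\ref{remark-weak*-convergence} and Lemma~\ref{lemma-weak-compact-oper-M(K)}, whose hypotheses make it clear why ``large'' supports of the associated $g_U$ are incompatible with weak compactness). For each such object I would extend $\mathcal{A}_\alpha$ by a new clopen element separating the sequence in a way that kills the obstruction. The key difference from Koszmider's type~(1), where the extensions are performed via almost disjoint families over $\omega$ and therefore introduce isolated points, is that here the new generator $A$ must be chosen so that $A\wedge b$ and $(\neg A)\wedge b$ are both nonzero for every nonzero $b$ already present; since the current algebra is atomless, there is always enough freedom to perform the desired separation while additionally splitting every existing nonzero element, and atomlessness is preserved at limits by directed union.

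The principal obstacle is precisely this compatibility between perfectness and the obstruction-killing scheme: in \cite{Koszmider}, total disconnectedness (type~(1)) and perfectness (type~(2), obtained via connectedness of $K\setminus F$ for finite $F$) arise from \emph{different} constructions that cannot be combined na\"ively. The technical heart of the proof must therefore show, at each successor stage, that an atomless extension killing the given obstruction is always available; and then that after $\omega_1$ stages a standard reflection argument still works, namely any $T\in L(C(K))$ is determined by its action on some countable $\mathcal{A}_\alpha$, and the step at $\alpha$ has already forced the candidate measures $T^*(\delta_x)$ into the $g\,\Id+S$ form of Definition~\ref{def-weak-multiplication-weak-multiplier}. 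Granting this, the verification that $K$ has all four advertised properties is immediate from the algebraic translation: totally disconnected by \cite[Theorem~7.5]{Kop}, perfect because $\mathcal{A}$ is atomless, infinite because $\mathcal{A}_0$ is, and with only weak-multiplier operators by construction.
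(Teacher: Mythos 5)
Your outline stays at the level of ``run a Koszmider-style transfinite construction and keep the algebra atomless,'' and it misses (and in places contradicts) the points that actually carry the proof. First, the proposition is unconditional and the paper proves it in ZFC by a recursion of length $2^\omega$ modifying Lemma~3.1 of \cite{Koszmider}; the continuum hypothesis is not used. The CH remark in the introduction concerns making the connected spaces of type (3) into subspaces of $\ell_\infty$, not this statement, so a length-$\omega_1$ construction under CH would only prove a weaker, conditional result. Second, the obstruction to being a weak multiplier is not killed by ``a new clopen element separating the sequence.'' What is needed (this is the content of Lemmas~3.1 and 3.2 of \cite{Koszmider}, as the paper recalls) is an atomless algebra $\mathcal{A}\subseteq\wp(\N)$ \emph{together with a countable dense set} $\{q_n:n\in\N\}$ of its Stone space such that for every antichain $(A_n)$ in $\mathcal{A}$ and every sequence $(q_{\ell_n})$ avoiding all $A_m$ there is an infinite $b\subseteq\N$ for which $\{A_m:m\in b\}$ has a supremum in $\mathcal{A}$ while the closures of $\{q_{\ell_n}:n\in b\}$ and $\{q_{\ell_n}:n\notin b\}$ still intersect. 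Your sketch contains neither the dense sequence of non-isolated ultrafilters, which must be extended coherently at every successor stage (exactly the complication the paper isolates and resolves as in construction 5.1 of \cite{Koszmider}), nor the preservation problem: the nonempty intersections of closures secured at earlier stages must survive all later extensions, which the paper arranges by choosing the new generator as a union over a member of an almost disjoint family of size $2^\omega$ and a counting argument.

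In addition, two of your concrete claims are incorrect. The new generator at a successor stage is forced to be an infinite union of members of the given antichain, so it cannot in general be chosen to split every nonzero element of the current algebra; nor is that needed, since atomlessness is preserved simply because one adjoins an infinite supremum of elements of an already atomless algebra. And an operator on $C(K)$ is not ``determined by its action on some countable $\mathcal{A}_\alpha$'': the verification that all operators are weak multipliers is not a reflection argument, but proceeds by extracting from a putative non-weak-multiplier a countable witness (a pairwise disjoint sequence of clopen sets and a sequence of points from the fixed dense set) and contradicting the combinatorial property above, which the bookkeeping of the length-$2^\omega$ recursion has guaranteed for the final algebra. Without these ingredients the proposal does not yet constitute a proof.
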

\begin{proof}
We will describe a modification of the construction from Section~$3$
of \cite{Koszmider}. As seen in Lemma~$3.2$ of \cite{Koszmider}, the
only properties of points $n^*$ for $n\in\N$ of the constructed $K$
which are needed to prove that every operator on $C(K)$ is a weak
multiplier are those stated in Lemma~$3.1$ and the density in $K$.
Thus, to prove the proposition it is  enough to construct an
atomless Boolean algebra ${\mathcal A} \subseteq\wp(\N)$ (the lack
of atoms is equivalent to the fact that the Stone space $K$ is
perfect) and a countable dense subset $\{q_n:n\in\N\}$ of its Stone
space such that given

\begin{itemize}
\item [a)] a sequence $(A_n:n\in \N)$ of pairwise disjoint elements of
${\mathcal A}$,

\item [b)] a sequence $({\ell_n}:n\in \N)$ of distinct natural numbers such
that $q_{\ell_n}\not \in A_m$ for $n,m\in \N$,
\end{itemize}

there is an infinite  $ b\subseteq \N$ such that
\begin{itemize}
\item [c)] $\{ A_m: m\in b\}$ has its supremum $A$ in ${\mathcal A}$
 and
\item [d)] the intersection of the sets
  ${\overline{\{q_{\ell_n}:n\in b\}}}$ and
  ${\overline{\{q_{\ell_n}: n\not\in b\}}}$
in the Stone space $K$ of ${\mathcal A}$
is nonempty.
\end{itemize}

Such an algebra is  constructed as in the proof of Lemma~$3.1$ of
\cite{Koszmider}. Indeed, we give an outline of the modification of
that proof (observe that the notation used in \cite{Koszmider} will
be kept). The only complication is that the ultrafilters $q_n$ are
not absolutely defined as the ultrafilters $n^*$. This means that at
each inductive step of the construction of the subalgebras
${\mathcal A}_\alpha$ for $\alpha\leq 2^\omega$ one needs to extend
the ultrafilter $q_n|\alpha$ of ${\mathcal A}_\alpha$ to an
ultrafilter $q_n|(\alpha+1)$ of the new bigger algebra ${\mathcal
A}_{\alpha+1}$, since at the limit stages the ultrafilters are
determined by their intersections with the previous algebras.

This problem has been encountered in the connected construction of
$5.1$ of \cite{Koszmider} and is resolved in the same way.  Namely,
if $q_n|\alpha$ has only one extension to an ultrafilter of
${\mathcal A}_{\alpha+1}$, then one puts it as
 $q_n|(\alpha+1)$, and otherwise one needs to make some uniform choice, for
example $q_n|(\alpha+1)$ is such an ultrafilter of ${\mathcal
A}_{\alpha+1}$ which extends $q_n|\alpha$  and does not contain
$A_\alpha$, the generator of ${\mathcal A}_{\alpha+1}$ over
${\mathcal A}_\alpha$ .

Now, at stage $\alpha<2^\omega$ we are given $\alpha$ premises
of the form
\begin{equation}\label{eq-ss}
{\overline{\{q_n|\alpha: n\in b_\beta\}}}
\cap{\overline{\{q_n|\alpha: n\in
 a_\beta-b_\beta\}}}\not=\emptyset,
\end{equation}
for $\beta<\alpha$ where the closures are taken in the Stone
space of the algebra ${\mathcal A}_\alpha$. We are also given
an antichain $(A_n: n\in \N)$ in the algebra ${\mathcal
A}_\alpha$ and need to preserve the premisses when passing to
the algebra ${\mathcal A}_{\alpha+1}$ generated over ${\mathcal
A}_\alpha$ by an element $A_\alpha$ which is an infinite sum
(in $\wp(\N)$) of some infinite subsequence of $(A_n: n\in b)$
where $b$ should be an arbitrary infinite subset of some
$a\subseteq\N$. So we need to make a good choice of $a\subseteq
\N$.

We may assume that all points of the intersection of the
closures from \eqref{eq-ss} are from outside of the clopen sets
$[A_n]$, since otherwise the premises are always preserved for
any choice of the subsequence. Now we may find an infinite
$a\subseteq \N$ such that that
\begin{equation}\label{eq-sss}
{\overline{\{q_n|\alpha\not\in\bigcup_{m\in a}[A_m] : n\in
b_\beta, \}}} \cap{\overline{\{ q_n|\alpha\not\in\bigcup_{m\in
a}[A_m] :  n\in
 a_\beta-b_\beta,\}}}\not=\emptyset,
\end{equation}
holds for every $\beta<\alpha$.
 Thus, if the extension is obtained from any infinite $b\subseteq a$, the points
$q_n|\alpha\not\in\bigcup_{m\in b}[A_m] $ all extend to
ultrafilters which do not contain $A_\alpha$ hence the
preservation of the premises \eqref{eq-ss} can be easily proved
and so the inductive construction can continue.

The method of finding $a$ is already employed in $3.1$ of
\cite{Koszmider}. One considers an almost disjoint family
$\{a^\theta: \theta<2^\omega\}$ of size $2^\omega$ of infinite
subsets of $\N$, one chooses $x_\beta$ from the intersections
\eqref{eq-ss} and sees that for at most one choice of $\theta$
the point $x_\beta$ does not belong to
${\overline{\{q_n|\alpha\not\in\bigcup_{m\in a^\theta}[A_m] :
n\in b_\beta,  \}}}$, the same holds for the other part of the
premise. Thus, by the counting argument we have  that there is
$\theta<2^\omega$ such that \eqref{eq-sss} holds for
$a=a^\theta$ for all $\beta<\alpha$.

One is left with checking that the extensions of Boolean
algebras which are used in $3.1$ of \cite{Koszmider} do not
introduce atoms, but it is clear as we extend by adding an
infinite union of elements of the previous algebra. Other
arguments are as in $3.1$ of \cite{Koszmider}.
\end{proof}

\begin{prop}\label{prop-family-Ki-weak-multipliers}
There is a family $(K_i)_{i\in \N}$ of pairwise disjoint perfect and
totally disconnected compact spaces such that every operator on
$C(K_i)$ is a weak multiplier (thus, $C(K_i)$ is Grothendieck) and
for $i\not=j$ every operator $T\in L\big(C(K_i),C(K_j)\big)$ is
weakly compact.
\end{prop}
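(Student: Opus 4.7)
The plan is to realise the family $(K_i)_{i\in\N}$ as pairwise disjoint nonempty clopen subsets of a single ambient compact space $K$ already furnished by Proposition~\ref{prop-perfect-discon-few}, with no fresh transfinite construction needed. Both properties claimed will then be read off from the weak multiplier decomposition of operators on the ambient $C(K)$, by sandwiching between the natural projections and injections and invoking Lemma~\ref{lemma-duality-proy-injec}.

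First, take $K$ to be the perfect, infinite, totally disconnected compact space given by Proposition~\ref{prop-perfect-discon-few}, so that every operator on $C(K)$ is a weak multiplier. Since $\Clop(K)$ is atomless and infinite, I can pick an infinite pairwise disjoint family of nonzero elements and let $(K_i)_{i\in\N}$ be the corresponding clopen subsets. Each $K_i$ is then compact and totally disconnected, and it is perfect because an isolated point of an open subspace of a perfect space would also be isolated in the ambient space.

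Next, fix $i,j\in\N$ and $T\in L(C(K_i),C(K_j))$. Extend $T$ to the ambient space by setting $\widehat{T}:=I_j\,T\,P_i\in L(C(K))$, so that $T=P_j\widehat{T}I_i$ (using $P_iI_i=\Id_{C(K_i)}$ and $P_jI_j=\Id_{C(K_j)}$). By hypothesis one can write $\widehat{T}^*=g\,\Id+S$ with $g$ a Borel function on $K$ integrable with respect to every Radon measure and $S\in W(M(K))$. Dualising $T=P_j\widehat{T}I_i$ and applying Lemma~\ref{lemma-duality-proy-injec} yields
$$
T^*=I_i^*\widehat{T}^*P_j^*=P^i(g\,\Id)I^j+P^iSI^j.
$$
If $i=j$, the first summand collapses to $g|_{K_i}\Id_{M(K_i)}$: the restriction $g|_{K_i}$ is still Borel and integrable against every Radon measure on $K_i$ (each such measure extends to a Radon measure on $K$ via $I^i$), while $P^iSI^i\in W(M(K_i))$, so $T$ is a weak multiplier on $C(K_i)$, and the Grothendieck property of $C(K_i)$ follows from \cite[Theorem~2.4]{Koszmider}. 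If $i\neq j$, the disjointness $K_i\cap K_j=\varnothing$ forces $P^i(g\,\Id)I^j=0$, so $T^*=P^iSI^j$ is weakly compact; by Gantmacher's theorem, $T$ itself is weakly compact.

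The nontrivial work is entirely encapsulated in Proposition~\ref{prop-perfect-discon-few}, and no substantial obstacle remains: the argument above is a mechanical unpacking of the restriction/extension duality of Lemma~\ref{lemma-duality-proy-injec}. The only mildly delicate checkpoint is to see that $g|_{K_i}$ retains the required integrability, which is immediate from extending Radon measures on $K_i$ to $K$ by $I^i$.
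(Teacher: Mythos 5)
Your proposal is correct and coincides with the paper's own argument: take disjoint clopen subsets of the space $K$ from Proposition~\ref{prop-perfect-discon-few}, write $T=P_j\widetilde{T}I_i$ with $\widetilde{T}=I_jTP_i$ a weak multiplier, and split $T^*$ via Lemma~\ref{lemma-duality-proy-injec} into $g|_{K_i}\Id_{M(K_i)}$ plus a weakly compact part when $i=j$, and a weakly compact operator when $i\neq j$. The extra checks you add (perfectness of the clopen pieces, integrability of $g|_{K_i}$) are fine but routine.
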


\begin{proof}
Consider $K$ perfect and totally disconnected so that every operator
in $C(K)$ is a weak multiplier, fix a family $(K_i)_{i\in \N}$ of
pairwise disjoint clopen subsets of $K$, and let us prove that this
family satisfies the desired conditions. Fixed $i,j \in \N$ and an
operator $T: C(K_i)\longrightarrow C(K_j)$, we define
$\widetilde{T}:C(K)\longrightarrow C(K)$ by
$$
\widetilde{T}=I_j T P_i
$$
which is a weak multiplier by hypothesis and so there are $g$ a
Borel function on $K$ and $S\in W\big(M(K)\big)$ so that
$\widetilde{T}^*=g\Id+S$. Besides, it is clear that $P_\ell I_\ell$
is the identity on $C(K_\ell)$ for every $\ell\in \N$, so we have
that $T=P_j \widetilde{T} I_i$ and, therefore,
$$
T^*=I_i^* \widetilde{T}^* P_j^*=I_i^*(g\Id+S) P_j^*= I_i^* g\Id
 P_j^*+ I_i^* S  P_j^*.
$$
Finally, the operator $I_i^* S  P_j^*$ is weakly compact and
Lemma~\ref{lemma-duality-proy-injec} tells us that
$$
I_i^* g\Id  P_j^*=\begin{cases} 0 & \text{for\ } i\not=j\\
g|_{K_i}\Id_{M(K_i)} & \text{for\ } i=j,
\end{cases}
$$
finishing thus the proof.
\end{proof}

In the following we will be considering some compactifications of
disjoint unions of perfect compact spaces $K_i$, that is, compact
spaces where $\bigcup_{i\in\N}K_i$ is open and dense. The next
result, which will be the cornerstone of our further discussion,
gives a sufficient condition on such compactifications for obtaining
that the associated space of continuous functions is extremely
non-complex.

\begin{prop}\label{prop-suf-condition}
Let $(K_i)_{i\in\N}$ be the family given in
Proposition~\ref{prop-family-Ki-weak-multipliers} and let $K$ be a
compactification of ${\bigcup}_{i\in\N}K_i$ so that every operator
$T:C(K_i)\longrightarrow C(K_{-i})$ is weakly compact or every
operator $T:C(K_{-i})\longrightarrow C(K_{i})$ is weakly compact.
Then, $C(K)$ is an extremely non-complex space.
\end{prop}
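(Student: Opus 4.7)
The plan is to reduce the analysis of an arbitrary $T\in L(C(K))$ to local considerations on each $K_i$ and then to apply Lemma~\ref{Werner-theorem} to $T^2$. Each $K_i$ is clopen in $K$ (being compact, hence closed, and being open in the open set $\bigcup_{i\in\N} K_i$), so the operators $P_i,\,I_i,\,P_{-i},\,I_{-i}$ of Lemma~\ref{lemma-duality-proy-injec} are available, and $I_iP_i+I_{-i}P_{-i}=\Id_{C(K)}$. Writing $A_i=P_iTI_i$, $A_{i,-i}=P_iTI_{-i}$ and $A_{-i,i}=P_{-i}TI_i$, inserting this identity between the two factors of $T^2$ and composing with $P_i$ on the left and $I_i$ on the right yields the key block identity
\[
P_iT^2I_i \,=\, A_i^2 \,+\, A_{i,-i}A_{-i,i}.
\]

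Next I would describe the adjoint $(P_iT^2I_i)^*$. Since every operator on $C(K_i)$ is a weak multiplier by Proposition~\ref{prop-family-Ki-weak-multipliers}, one has $A_i^*=g_i\Id+S_i$ with $g_i$ a bounded Borel function on $K_i$ and $S_i\in W(M(K_i))$, hence $(A_i^2)^*=g_i^2\Id+S_i'$ with $g_i^2\geq 0$ and $S_i'\in W(M(K_i))$. The standing hypothesis on $K$ guarantees that for the given $i$ at least one of $A_{i,-i},\,A_{-i,i}$ is weakly compact, and therefore $A_{i,-i}A_{-i,i}\in L(C(K_i))$ is weakly compact. Altogether $(P_iT^2I_i)^*=g_i^2\Id+\widehat{S}_i$ with $g_i^2\geq 0$ Borel and $\widehat{S}_i\in W(M(K_i))$, which is precisely the hypothesis of Theorem~\ref{thm-weak-mult-perfect} on the perfect compact space $K_i$.

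Finally I would transfer the conclusion back to $C(K)$. Theorem~\ref{thm-weak-mult-perfect} applied to $P_iT^2I_i$ on $C(K_i)$ gives that $\{x\in K_i:[(P_iT^2I_i)^*(\delta_x)](\{x\})\geq 0\}$ is dense in $K_i$, and Lemma~\ref{lemma-duality-proy-injec} identifies $[(P_iT^2I_i)^*(\delta_x)](\{x\})$ with $(T^2)^*(\delta_x)(\{x\})$ whenever $x\in K_i$. Taking the union over $i$ and invoking the density of $\bigcup_{i\in\N}K_i$ in $K$, the set $\{x\in K:(T^2)^*(\delta_x)(\{x\})\geq 0\}$ is dense in $K$; Lemma~\ref{Werner-theorem} applied to $T^2$ then delivers $\|\Id+T^2\|=1+\|T^2\|$, as required. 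The most delicate step is the adjoint bookkeeping that links $(P_iT^2I_i)^*$ on $M(K_i)$ with $(T^2)^*$ on $M(K)$ via Lemma~\ref{lemma-duality-proy-injec}; once that identification is in place, the conclusion is an assembly of the previously developed tools.
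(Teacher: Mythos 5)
Your proposal is correct and follows essentially the same route as the paper's proof: the block identity $P_iT^2I_i=(P_iTI_i)^2+(P_iTI_{-i})(P_{-i}TI_i)$, the weak-multiplier representation of $(P_iTI_i)^*$ giving $(P_iT^2I_i)^*=g_i^2\Id+\widehat{S}_i$ with the cross term absorbed as weakly compact, then Theorem~\ref{thm-weak-mult-perfect} on each perfect $K_i$, the identification $(P_iT^2I_i)^*(\delta_x)(\{x\})=(T^2)^*(\delta_x)(\{x\})$ via Lemma~\ref{lemma-duality-proy-injec}, density of $\bigcup_{i\in\N}K_i$, and Lemma~\ref{Werner-theorem} applied to $T^2$. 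Your explicit remark that each $K_i$ is clopen in $K$ is a small point the paper leaves implicit, but otherwise the arguments coincide.
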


\begin{proof}
Fixed $T\in L\big(C(K)\big)$ and $i\in \N$, we can write
$$
P_iT^2I_i=P_iT(I_iP_i+I_{-i}P_{-i})TI_i= (P_iTI_i)(P_iTI_i)+
(P_iTI_{-i})(P_{-i}TI_i)
$$
and, therefore,
$$
(P_iT^2I_i)^*=(P_iTI_i)^*(P_iTI_i)^*+\big[(P_iTI_{-i})(P_{-i}TI_i)\big]^*.
$$
We observe that the second summand is weakly compact by hypothesis
since $P_{-i}TI_i\in L\big(C(K_i),C(K_{-i})\big)$ and $P_iTI_{-i}\in
L\big(C(K_{-i}),C(K_{i})\big)$. Besides, $P_iTI_i$ is an operator on
$C(K_i)$ thus, there exist a bounded Borel function $g$ and a weakly
compact operator $S\in L\big(M(K)\big)$ so that $(P_iTI_i)^*=g\Id+S$
and hence we can deduce that
$$
(P_iT^2I_i)^*=(g\Id+S)^2 + \big[(P_iTI_{-i})(P_{-i}TI_i)\big]^*=
g^2\Id+S'
$$
where $S'$ is a weakly compact operator on $M(K)$. Now, since $K_i$
is perfect for every $i\in \N$, we can use
Theorem~\ref{thm-weak-mult-perfect} to get that the set
$$
\{x\in K_i \ : \ \big(P_i T^2 I_i\big)^*(\delta_x)(\{x\})\geq0\}
$$
is dense in $K_i$. Finally, we use that for $x\in K_i$ one has
$$
\big(P_i T^2 I_i\big)^*(\delta_x)(\{x\})=\big(P^i (T^2)^*
I^i\big)(\delta_x)(\{x\})=(T^2)^*(\delta_x)(\{x\})
$$
and the fact that $\underset{i\in\N}{\bigcup} K_i$ is dense in $K$
to deduce that the set
$$
\{x\in K \ : \ (T^2)^*(\delta_x)(\{x\})\geq0 \}
$$
is dense in $K$ which, by making use of Lemma~\ref{Werner-theorem},
tells us that $T^2$ satisfies the Daugavet equation.
\end{proof}

Our next aim is to construct compact spaces $K$ in such a way that
$C(K)$ is an extremely non-complex space and so that there exist
operators on $C(K)$ which are not weak multipliers. To do so, we
consider a suitable family of totally disconnected compact spaces
$(K_i)_{i\in\N}$ and we obtain our compact spaces as zero
dimensional compactifications of the disjoint union
$\bigcup_{i\in\N} K_i$. This kind of spaces has been completely
described (see \cite[Proposition~8.8]{Kop}, for instance), namely,
they are the Stone spaces of the Boolean subalgebras of the
cartesian product $\mathcal{A}=\Pi_{i\in\N} \Clop(K_i)$ which
contain the subalgebra given by
$$
\Pi_{i\in\N}^w \Clop(K_i)=\Big\{a\in \mathcal{A} \ : \ \{i\in\N \ :
\ a_i\neq\varnothing \} \text{ is finite or } \{i\in\N \ : \ a_i\neq
K_i\} \text{ is finite } \Big\}.
$$
Therefore, we will be interested in constructing such a type of
Boolean algebras. Indeed, let $\mathcal{B}\subset \wp(\N)$ be a
Boolean algebra containing all finite and cofinite subsets of $\N$
then, $\bigoplus^\mathcal{B}_{i<\omega} \Clop(K_i)$ will denote the
Boolean algebra isomorphic to the algebra of subsets of
$\bigcup_{i\in\N} K_i$ consisting of elements of the form
$$
c=c(b,F,\{a_j \ : \ j\in F\}) =\bigcup_{i\in b}K_i\cup\{a_j \ : j\in
F\}
$$
where $b\in\mathcal{B},\ F$ is a finite subset of $\N$ and $a_j\in
\Clop(K_j)$ for all $j\in F$. By the preceding observations, the
Stone space $K$ of $\bigoplus^\mathcal{B}_{i<\omega} \Clop(K_i)$ is
a compactification of the disjoint union $\bigcup_{i\in\N} K_i$. We
are ready to state and prove a result which includes great part of
the difficulties of our first construction.

\begin{theorem}\label{thm-Grothendieck-spaces}
Let $\mathcal{B}\subset \wp(\N)$ be a Boolean algebra containing all
finite and cofinite subsets of $\N$, $(K_i)_{i\in\N}$ a family of
totally disconnected compact spaces, and $K$ the Stone space of
$\bigoplus^\mathcal{B}_{i<\omega} \Clop(K_i)$. Suppose that $C(K_i)$
are Grothendieck Banach spaces so that, for $j\neq i$, every
operator in $L\big(C(K_i),C(K_j)\big)$ is weakly compact and suppose
that $B_{M(L)}$ is weak$^*$-sequentially compact where $L$ is the
Stone space of $\mathcal{B}$. Then, every operator from $C(K_{i})$
into $C(K_{-i})$ is weakly compact.
\end{theorem}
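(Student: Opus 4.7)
The plan is to prove that the adjoint $T^*:M(K_{-i})\longrightarrow M(K_i)$ is weakly compact, which by Gantmacher's theorem is equivalent to the weak compactness of $T$. Because $C(K_i)$ is Grothendieck, weak$^*$ sequential convergence and weak sequential convergence of sequences in $M(K_i)$ coincide; combined with the Eberlein--\v{S}mulian theorem, this reduces the task to showing that for every bounded sequence $(\nu_n)\subset M(K_{-i})$ there is a subsequence along which $T^*\nu_n$ converges weak$^*$ in $M(K_i)$, that is, such that the scalar sequence $\nu_n(Tf)$ converges for every $f\in C(K_i)$.

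Given such $(\nu_n)$ with $\|\nu_n\|\leq M$, set $\partial=K\setminus\bigsqcup_{j\in\N}K_j$, a closed subset of $K_{-i}$. Since each $K_j$ is clopen, every $\nu\in M(K_{-i})$ decomposes uniquely as $\nu=\sum_{j\neq i}I^j(\nu|_{K_j})+\nu|_\partial$ with $\sum_j\|\nu|_{K_j}\|+\|\nu|_\partial\|=\|\nu\|$. Writing $\nu_n^{(j)}=P^j\nu_n\in M(K_j)$ and $\nu_n^{(\partial)}=\nu_n|_\partial$,
\[
\nu_n(Tf)=\sum_{j\neq i}\int_{K_j}Tf\,d\nu_n^{(j)}+\int_\partial Tf\,d\nu_n^{(\partial)}.
\]
Two extractions are then available. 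First, for each $j\neq i$ the adjoint $(P_jTI_i)^*=P^iT^*I^j$ is weakly compact by hypothesis, so by Eberlein--\v{S}mulian and a diagonal argument one may pass to a subsequence along which $(P^iT^*I^j)(\nu_n^{(j)})$ converges weakly in $M(K_i)$ for every $j\neq i$; in particular $\int_{K_j}Tf\,d\nu_n^{(j)}$ converges for each $j$ and each $f$. Second, let $\pi:K_{-i}\longrightarrow L\setminus\{i\}\subseteq L$ denote the natural Stone-dual quotient sending each $K_j$ to the isolated point $\{j\}\in L$ and $\partial$ into $L\setminus\N$; pushing $|\nu_n|$ and $\nu_n^{(\partial)}$ forward and invoking the hypothesis that $B_{M(L)}$ is weak$^*$ sequentially compact, one passes to a further subsequence with $\pi_*|\nu_n|\to\tau$ and $\pi_*\nu_n^{(\partial)}\to\sigma$ weak$^*$ in $M(L)$. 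Since $\{j\}$ is clopen in $L$, this yields $\|\nu_n^{(j)}\|\to\tau(\{j\})$ with $\sum_j\tau(\{j\})<\infty$.

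The countable sum $\sum_{j\neq i}\int_{K_j}Tf\,d\nu_n^{(j)}$ should then converge by a dominated-convergence argument: each term converges by the first extraction, the $j$-th term is bounded by $\|T\|\,\|f\|\,\|\nu_n^{(j)}\|$, and the convergence $\|\nu_n^{(j)}\|\to\tau(\{j\})$ with $\sum_j\tau(\{j\})<\infty$ allows one to control the tails uniformly in $n$. The main obstacle is the boundary integral $\int_\partial Tf\,d\nu_n^{(\partial)}$: a priori $(Tf)|_\partial$ need not be constant on the fibres of $\pi|_\partial:\partial\longrightarrow L\setminus\N$, so the weak$^*$ convergence of $\pi_*\nu_n^{(\partial)}$ to $\sigma$ in $M(L)$ does not by itself deliver convergence of the boundary integral. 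The key step is to exploit the continuity of $Tf\in C(K_{-i})$ across $\partial$: for $x\in\partial$ with $\pi(x)=u\in L\setminus\N$, the value $(Tf)(x)$ is the limit of $(Tf)|_{K_j}(y)$ along every net $y\in K_j$ with $y\to x$ (so necessarily $j\to u$ in $L$), and one aims to show that this limit depends only on $u$, giving $(Tf)|_\partial=\widetilde{Tf}\circ\pi|_\partial$ for some $\widetilde{Tf}\in C(L\setminus\N)$. Once this is established, $\int_\partial Tf\,d\nu_n^{(\partial)}=\int_{L\setminus\N}\widetilde{Tf}\,d\pi_*\nu_n^{(\partial)}\longrightarrow\int\widetilde{Tf}\,d\sigma$, and combined with the convergence of the countable sum this yields convergence of $\nu_n(Tf)$ for every $f$, finishing the proof.
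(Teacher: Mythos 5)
Your reduction (Gantmacher plus the Grothendieck property of $C(K_i)$ to upgrade weak$^*$ convergence of $T^*\nu_n$ to weak convergence) is sound, and two of your ingredients do work: the per-coordinate extraction using weak compactness of $T^*I^j=(P_jT)^*$, and the boundary factorization. Indeed the latter, which you leave as an ``aim'', is automatic and easier than you suggest: the trace of the algebra $\bigoplus^\mathcal{B}_{i<\omega}\Clop(K_i)$ on $\partial=K\setminus\bigcup_j K_j$ is just $\mathcal{B}/\mathrm{fin}$, so $\pi|_\partial$ is a homeomorphism of $\partial$ onto $L\setminus\N$, any function factors through it, and Tietze extends $\widetilde{Tf}$ to $C(L)$ so that weak$^*$ convergence of $\pi_*\nu_n^{(\partial)}$ in $M(L)$ does give convergence of the boundary integrals.

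The genuine gap is the ``dominated-convergence'' step for $\sum_{j\neq i}\int_{K_j}Tf\,d\nu_n^{(j)}$. From $\|\nu_n^{(j)}\|\to\tau(\{j\})$ for each fixed $j$ and $\sum_j\tau(\{j\})<\infty$ you cannot conclude any uniform-in-$n$ control of the tails $\sum_{j>J}\|\nu_n^{(j)}\|$: mass may escape to infinity along the coordinates. Take $\nu_n$ a unit point mass (with alternating sign, say) at a point of $K_n$; then every $\|\nu_n^{(j)}\|\to 0$, $\tau$ charges only $L\setminus\N$, yet $\sum_{j>J}\|\nu_n^{(j)}\|=1$ for all $n>J$, and the coordinate sum is exactly the term your termwise-limit-plus-tail argument is supposed to control. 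This escaping mass (measures drifting from $\bigcup_j K_j$ towards $\partial$) is precisely the difficulty the theorem addresses, and your decomposition entangles it between the coordinate sum and the boundary term, with no mechanism to handle the part of $\nu_n$ sitting on coordinates large relative to $n$. The paper's proof avoids this by arguing by contradiction: a non-weakly-compact $T$ yields, via Dieudonn\'{e}--Grothendieck, measures $\mu_n$ and pairwise disjoint clopen $A_n\subseteq K_i$ with $|T^*(\mu_n)(A_n)|>\eps$; one then subtracts from $\mu_n$ its restrictions to $K_1,\dots,K_n$, using the hypothesis on operators $C(K_i)\to C(K_j)$ to keep $|T^*(\widetilde{\mu}_n)(A_n)|>\eps/2$, so that the modified measures eventually vanish on each $K_j$. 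For such measures the weak$^*$ limit is governed by $\mathcal{B}$ alone (this is where $B_{M(L)}$ being weak$^*$-sequentially compact enters), and the Grothendieck property then contradicts the Dieudonn\'{e}--Grothendieck witness. Without some analogue of this truncation step, your argument does not go through.
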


\begin{proof}
Suppose that there are $i\in \N$ and a bounded operator $T:
C(K_{i})\rightarrow C(K_{-i})$ which is not weakly compact. Then by
Gantmacher's Theorem its adjoint neither is weakly compact, which
means by the Dieudonn\'{e}-Grothendieck Theorem \cite[VII.14]{Die} that
there are a bounded sequence of measures $\mu_n\in M(K_{-i})$,
pairwise disjoint clopen subsets $A_n$ of $K_{i}$, and
$\varepsilon>0$ such that
$$
|T^*(\mu_n)(A_n)|>\varepsilon \qquad (n\in\N).
$$
For each $j\neq i$, we use Lemma~\ref{lemma-duality-proy-injec} to
write $(P_j I_{-i} T)^*=T^*P^{-i} I^j$ and we observe that $P_j
I_{-i}T$ is an operator from $C(K_{i})$ into $C(K_{j})$ and thus it
is weakly compact by hypothesis. Hence, $\big\{T^*P^{-i}
I^j(P^jI^{-i}(\mu_n))\big\}_{n\in\N}$ is relatively weakly compact
and so
$$
\big\{T^*P^{-i}
I^j(P^jI^{-i}(\mu_n))(A_n)\big\}_{n\in\N}\longrightarrow0
$$
for every $j\neq i$. Therefore, by a diagonalization process we may
find subsequences of $\{\mu_n\}_{n\in\N}$ and $\{A_n\}_{n\in\N}$
(which we also call $\{\mu_n\}_{n\in\N}$ and $\{A_n\}_{n\in\N}$) so
that
$$
\big|T^*P^{-i} I^j(P^jI^{-i}(\mu_n))(A_n)\big|<\frac{\eps}{2^{j+1}}
$$
for every $n\in\N$ and $j\in\{1,\dots,n\}\setminus\{i\}$. Now, we
consider the family of measures on $K_{-i}$ given by
$$
\widetilde{\mu}_n=\mu_n-\sum_{1\leq j\leq n}^{j\neq i} P^{-i} I^j
P^jI^{-i}(\mu_n)
$$ which is bounded since the measures involved have disjoint
supports (in fact $|\widetilde{\mu}_n|\leq 2|\mu_n|$) and satisfies
that
$$
|\widetilde{\mu}_n|(K_j)=0 \qquad
\big(j\in\{1,\dots,n\}\setminus\{i\}\big)\qquad \text{and} \qquad
|T^*(\widetilde{\mu}_n)(A_n)|>\frac{\eps}{2}
$$
for every $n\in \N$. Indeed, we can estimate as follows:
\begin{align*}
|T^*(\widetilde{\mu}_n)(A_n)|&\geq |T^*(\mu_n)(A_n)|-\sum_{1\leq
j\leq n}^{j\neq i}\big|T^*P^{-i} I^j(P^jI^{-i}(\mu_n))(A_n)\big|
>\eps -\frac{\eps}{2}
\end{align*}
which tells us that the sequence
$\{T^*(\widetilde{\mu}_n)\}_{n\in\N}$ is not relatively weakly
compact.

Besides, let $\mathcal{B}'$ be the subalgebra of $\Clop(K)$
corresponding to the elements of the form $\bigcup_{i\in b}K_i$ for
$b\in \mathcal{B}$, which is clearly isomorphic to $\mathcal{B}$.
For $n\in\N$, we consider the finitely additive measure on
$\mathcal{B}'$ given by
$\nu_n=I^{-i}(\widetilde{\mu}_n)|_{\mathcal{B}'}$. Since $B_{M(L)}$
is weak$^*$-sequentially compact, we may and do assume without loss
of generality that $\{\nu_n\}_{n\in\N}$ is pointwise convergent to a
finitely additive measure $\nu$ on $\mathcal{B}'$ which, in
addition, satisfies that $\nu(K_j)=0$ for every $j\in \N$. Indeed,
for $j\in\N$ and $n>j$, we can write
$$
I^{-i}(\widetilde{\mu}_n)(K_j)=\widetilde{\mu}_n(K_j\cap K_{-i})=0
$$
and, therefore, $\nu(K_j)=\lim I^{-i}(\widetilde{\mu}_n)(K_j)=0$.
Next, we extend $\nu$ to a finitely additive measure $\mu$ on
$\bigoplus^\mathcal{B}_{i<\omega}\Clop(K_i)$ by putting
$$
\mu(c(b,F,\{a_j \ : \ j\in F\}))= \nu(c(b,\varnothing,\varnothing))
$$
for $c(b,F,\{a_j \ : \ j\in F\})\in
\bigoplus^\mathcal{B}_{i<\omega}\Clop(K_i)$ and we observe that
\begin{align*}
I^{-i}(\widetilde{\mu}_n)\big(c(b,F,\{a_j \ : \ j\in
F\})\big)&=I^{-i}(\widetilde{\mu}_n)\left(\Big(\underset{\ell\in
b}{\textstyle{\bigcup}}K_\ell\Big)\cup \Big(\underset{j\in
F\setminus b}{\textstyle{\bigcup}} a_j\Big)
\right)\\&=\nu_n\Big(\underset{\ell\in
b}{\textstyle{\bigcup}}K_\ell\Big)+
\widetilde{\mu}_n\Big(\underset{j\in F\setminus
(b\cup\{i\})}{\textstyle{\bigcup}}
a_j\Big)\\&=\nu_n\Big(\underset{\ell\in
b}{\textstyle{\bigcup}}K_\ell\Big)
\end{align*}
where the last equality holds for every sufficiently large $n$ since
$|\widetilde{\mu}_n|(K_j)=0$ for every
$j\in\{1,\dots,n\}\setminus\{i\}$. So, we have that
$$
\left\{I^{-i}(\widetilde{\mu}_n)\big(c(b,F,\{a_j \ : \ j\in
F\})\big)\right\}_{n\in\N}\longrightarrow \mu\big(c(b,F,\{a_j \ : \
j\in F\})\big)
$$
which, together with Remark~\ref{remark-weak*-convergence},
tells us that $\{I^{-i}(\widetilde{\mu}_n)\}_{n\in\N}$
converges in the weak$^*$ topology to the unique extension of
$\mu$ to an element of $M(K)$ that we also denote by $\mu$.
Now, since $P^{-i}=(I_{-i})^*$ is weak$^*$-weak$^*$ continuous
and $\widetilde{\mu}_n=P^{-i} I^{-i}(\widetilde{\mu}_n)$ for
$n\in\N$, we obtain that $\{\widetilde{\mu}_n\}_{n\in\N}$
weak$^*$ converges to $P^{-i}(\mu)$ and, therefore,
$\{T^*(\widetilde{\mu}_n)\}_{n\in\N}$ is weak$^*$ convergent.
Finally, the hypothesis of $C(K_i)$ being a Grothendieck space
tells us that $\{T^*(\widetilde{\mu}_n)\}_{n\in\N}$ converges
weakly, contradicting the fact that it is not relatively weakly
compact and completing thus the proof of the theorem.
\end{proof}

\begin{theorem}\label{thm-containing-C-Delta}
There is a compact space $K$ so that $C(K)$ is extremely non-complex
and contains a complemented isomorphic copy of $C(2^\omega)$.
\end{theorem}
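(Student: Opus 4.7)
The plan is to realize $K$ as the Stone space of $\bigoplus^{\mathcal{B}}_{i<\omega}\Clop(K_i)$, where $(K_i)_{i\in\N}$ is the family given by Proposition~\ref{prop-family-Ki-weak-multipliers} and $\mathcal{B}\subset\wp(\N)$ is a Boolean algebra containing all finite and cofinite subsets of $\N$, chosen so that its Stone space $L$ is metrizable and admits a continuous retraction onto a homeomorphic copy of $2^\omega$. Metrizability of $L$ makes $B_{M(L)}$ weak$^*$-sequentially compact and will deliver extremal non-complexity of $C(K)$ via Theorem~\ref{thm-Grothendieck-spaces} and Proposition~\ref{prop-suf-condition}, while the retraction will supply the complemented copy of $C(2^\omega)$ through Fact~\ref{fact-Boolean-projections}.

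To produce such an $L$, I fix a dense sequence $(x_n)_{n\in\N}$ in $2^\omega$ and take $L$ to be the closure of the graph $\{(n,x_n):n\in\N\}$ inside $(\N\cup\{\infty\})\times 2^\omega$, where $\N\cup\{\infty\}$ denotes the one-point compactification; this closure equals $\{(n,x_n):n\in\N\}\cup(\{\infty\}\times 2^\omega)$, which I identify with $\N\sqcup 2^\omega$. Then $L$ is compact, Hausdorff, second countable (hence metrizable), totally disconnected, $\N$ is a dense subset of isolated points of $L$, and the second-coordinate projection is a continuous retraction $q:L\to L$ fixing the copy of $2^\omega$ pointwise and sending $n\mapsto x_n$. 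Setting $\mathcal{B}=\{C\cap\N:C\in\Clop(L)\}$ gives a subalgebra of $\wp(\N)$ containing every finite and cofinite subset of $\N$ whose Stone space is $L$.

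With these choices, Theorem~\ref{thm-Grothendieck-spaces} yields that every operator $C(K_i)\to C(K_{-i})$ is weakly compact, and then Proposition~\ref{prop-suf-condition} gives that $C(K)$ is extremely non-complex. For the complemented $C(2^\omega)$, let $Q:\mathcal{B}\to\mathcal{B}$ be the Boolean homomorphism induced by $q^{-1}$ on clopens, so that $Q^2=Q$ and $Q(\mathcal{B})\cong\Clop(2^\omega)$. Define
$$
P:\bigoplus^{\mathcal{B}}_{i<\omega}\Clop(K_i)\longrightarrow\bigoplus^{\mathcal{B}}_{i<\omega}\Clop(K_i),\qquad P(c)=\bigcup_{i\in Q(b(c))}K_i,
$$
where $b(c)=\{i\in\N:K_i\subseteq c\}\in\mathcal{B}$. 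Then $P$ is a Boolean homomorphism with $P^2=P$ and image isomorphic to $Q(\mathcal{B})\cong\Clop(2^\omega)$, and Fact~\ref{fact-Boolean-projections} produces the desired norm-one projection $\widetilde{P}:C(K)\to C(K)$ whose range is isometric to $C(2^\omega)$.

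The step needing the most attention is checking that $P$ is a Boolean homomorphism. The assignment $b(\cdot)$ is not itself a homomorphism on $\bigoplus^{\mathcal{B}}_{i<\omega}\Clop(K_i)$: when $c$ cuts some $K_j$ into a proper clopen piece, $b(c^c)$ and $\N\setminus b(c)$ differ by a finite set, and similarly $b(c_1\cup c_2)$ may exceed $b(c_1)\cup b(c_2)$ by finitely many indices. The point is that $Q$ annihilates every finite subset of $\N$ (since $q(L)\subseteq 2^\omega$ forces $q^{-1}(\{j\})=\varnothing$ for each $j\in\N$), so these finite discrepancies disappear under $Q$ and $P$ ends up respecting intersections, unions and complements. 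Once this is clear, the remaining verifications are routine.
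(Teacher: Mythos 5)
Your proof is correct, and its first half is exactly the paper's: you take the family of Proposition~\ref{prop-family-Ki-weak-multipliers}, a Boolean algebra $\mathcal{B}\subseteq\wp(\N)$ containing the finite--cofinite algebra whose Stone space $L$ is metrizable (so $B_{M(L)}$ is weak$^*$-sequentially compact), and deduce extreme non-complexity of $C(K)$ from Theorem~\ref{thm-Grothendieck-spaces} plus Proposition~\ref{prop-suf-condition}. Where you genuinely diverge is the complemented copy. The paper chooses $\mathcal{B}$ generated by an independent family together with the finite sets, uses the point-selection map $c\mapsto\bigcup\{K_i: x_i\in c\}$ (which is a Boolean homomorphism outright, no mod-finite correction needed) to get, via Fact~\ref{fact-Boolean-projections}, a norm-one projection onto an isometric copy of $C(L)$, and then invokes Miljutin's theorem ($L$ uncountable compact metric, so $C(L)$ is isomorphic to $C(2^\omega)$). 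You instead take $\mathcal{B}=\{C\cap\N: C\in\Clop(L)\}$ for a concrete metrizable compactification $L$ of $\N$ with remainder $2^\omega$ that retracts onto the remainder, and compose the only ``mod-finite'' multiplicative assignment $b(c)=\{i: K_i\subseteq c\}$ with the idempotent homomorphism $Q$ induced by $q^{-1}$; since $q$ maps into the remainder, $Q$ annihilates finite subsets of $\N$, so the finite discrepancies of $b$ disappear and $P$ is an honest idempotent homomorphism with image isomorphic to $\Clop(2^\omega)$, whence Fact~\ref{fact-Boolean-projections} yields a norm-one projection onto an isometric copy of $C(2^\omega)$. What your route buys: it avoids Miljutin's theorem and gives the stronger conclusion of an isometric $1$-complemented copy of $C(2^\omega)$ (the theorem only asks for an isomorphic complemented copy; the paper reserves the isometric statement for its $\ell_\infty$ example). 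What the paper's route buys: the homomorphism property is immediate, since evaluating at a chosen point $x_i\in K_i$ is automatically multiplicative, whereas your $b(\cdot)$ needs exactly the mod-finite argument you supply — which you identify and verify correctly, so there is no gap.
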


\begin{proof}
Let us first recall that a countable independent family in a Boolean
algebra is a family $\{a_n: n\in \N\}$ such that
$$\eps_1a_{n_1}\cap....\cap \varepsilon_k a_{n_k}\not=\varnothing$$
for any distinct choice of $n_1,...n_k$, $k\in \N$ and
$\eps=\pm 1$ where $-a$ denotes the complement of $a$. We
consider an independent family of subsets of $\N$ and the
Boolean algebra $\mathcal{B}$ generated by it and the finite
subsets of $\N$, we take $(K_i)_{i\in\N}$ the family of perfect
and totally disconnected compact spaces given by
Proposition~\ref{prop-family-Ki-weak-multipliers} and we define
$K$ as the Stone space of the Boolean algebra
$\bigoplus^\mathcal{B}_{i<\omega}\Clop(K_i)$. Let us check that
$K$ satisfies the desired conditions: since $\mathcal{B}$ is
countable, its Stone space $L$ has a countable basis of
topology and so it is metrizable, which implies that
$(B_{M(L)}, w^*)$ is metrizable and, therefore, sequentially
compact. Henceforth, since every $C(K_i)$ is Grothendieck, we
can use Theorem~\ref{thm-Grothendieck-spaces} and
Proposition~\ref{prop-suf-condition} to get that $C(K)$ is
extremely non-complex. In order to prove that $C(K)$ has a
complemented copy of $C(2^\omega)$, we fix $x_i\in K_i$ for
$i\in\N$, we consider $\mathcal{B}'$ the Boolean subalgebra of
$\bigoplus^\mathcal{B}_{i<\omega}\Clop(K_i)$ formed by elements
of the form $\bigcup_{i\in b}K_i$ for $b\in \mathcal{B}$ (which
is obviously isomorphic to $\mathcal{B}$), and we define a
projection
$P:\bigoplus^\mathcal{B}_{i<\omega}\Clop(K_i)\longrightarrow\mathcal{B}'$
by
$$
P\big(c(b,F,\{a_j \ : \ j\in F\})\big)=\bigcup_{i\in b}\{K_i:x_i\in
c(b,F,\{a_j \ : \ j\in F\})\}
$$
for $c(b,F,\{a_j\ : \ j\in F\})\in
\bigoplus^\mathcal{B}_{i<\omega}\Clop(K_i)$. As we noted in
Fact~\ref{fact-Boolean-projections}, $P$ induces a norm-one
projection from $C(K)$ onto a subspace isometric to $C(L)$. Finally,
since $\mathcal{B}$ contains an infinite independent family, its Stone's space
maps onto $2^\omega$ and hence $L$ is
an uncountable metric compact space and, therefore, $C(L)$ is
isomorphic to $C(2^\omega)$ by Miljutin's Theorem (see
\cite[Theorem~4.4.8]{Albiac-Kalton}, for instance).
\end{proof}

\begin{remark}
By Theorem~\ref{Schlackow-theorem} and the
comments below it the above space has
many operators which are not weak multipliers.
\end{remark}

We recall that being extremely non-complex is a property stable
under isomorphisms in the class of $C(K)$ spaces where $K$ is a
perfect compact space so that every operator on $C(K)$ is a weak
multiplier (see Remark~\ref{remark:Schlackow}). The next remark
shows that this is no longer true when one leaves this class even if
one keeps perfectness.

\begin{remark}\label{remark:extremely non-complex}
{\slshape There are perfect compact spaces $K$ and $L$ so that
$C(K)$ is isomorphic to $C(L)$, $C(K)$ is extremely non-complex, and
$C(L)$ fails to be extremely non-complex.} Indeed, let $C(K)$ be
from Theorem~\ref{thm-containing-C-Delta} and recall that the proof
of that result gives the existence of a subspace $X$ of $C(K)$ so
that
$$
C(K)\sim X\oplus C(2^\omega)\sim X\oplus C(2^\omega)\oplus
C(2^\omega)\sim C(K)\oplus C(2^\omega)\sim C(K)\oplus_\infty
C(2^\omega) \sim  C(K{\dot \cup} 2^\omega).
$$
The latter space contains a complemented subspace isometric to a
square so that the projection on its complement is of norm one.
Therefore, Remark~4.12 of \cite{K-M-M} tells us that $C(K{\dot \cup}
2^\omega)$ is not extremely non-complex.
\end{remark}

Our next goal is to construct a compact space $K$ so that $C(K)$ is
extremely non-complex and contains $\ell_{\infty}$. In order to use
our machinery we have to define a suitable Boolean algebra: let
$(\mathcal{A}_i)_{i\in\N}$ be a family of Boolean algebras so that
$\mathcal{A}_i$ is isomorphic to an algebra of subsets of $X_i$ for
$i\in\N$, where $(X_i)_{i\in\N}$ is a family of pairwise disjoint
sets. Then, $\bigotimes_{i<\omega}\mathcal{A}_i$ will denote the
Boolean algebra isomorphic to the algebra of subsets of
$\bigcup_{i\in\N}X_i$ consisting of elements
$A\in\wp\Big(\bigcup_{i\in\N}X_i\Big)$ satisfying the condition
$A\cap X_i\in \mathcal{A}_i$ for every $i\in \N$.

\begin{theorem}\label{thm-l-infinity}
Let $(K_i)_{i\in\N}$ be a family of totally disconnected
compact spaces so that $C(K_i)$ does not include any copy of
$\ell_\infty$ and such that every operator from $C(K_i)$ into
$C(K_j)$ is weakly compact for $j\neq i$, and let $K$ be the
Stone space of $\bigotimes_{i<\omega}\Clop(K_i)$. Then, every
operator from $C(K_{-i})$ into $C(K_{i})$ is weakly compact.
\end{theorem}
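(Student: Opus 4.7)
The plan is a proof by contradiction using Rosenthal's theorem on operators from $\ell_\infty$: assuming some $T\in L\big(C(K_{-i}),C(K_i)\big)$ is not weakly compact, I will manufacture an isomorphic copy of $\ell_\infty$ inside $C(K_i)$, contradicting the hypothesis. The starting data are the Dieudonn\'e--Grothendieck witnesses of the non weak compactness of $T^*$: a bounded sequence $\mu_n\in M(K_i)$, pairwise disjoint clopens $A_n\subseteq K_{-i}$, and $\varepsilon>0$ with $|T^*(\mu_n)(A_n)|>\varepsilon$.

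The first half of the argument parallels the cleanup in Theorem~\ref{thm-Grothendieck-spaces}. For every $j\neq i$ the operator $TI_j\colon C(K_j)\to C(K_i)$ is weakly compact by hypothesis, so $(TI_j)^*=P^jT^*\colon M(K_i)\to M(K_j)$ is weakly compact. The Dieudonn\'e--Grothendieck criterion applied to the disjoint open sets $A_m\cap K_j\subseteq K_j$ yields $|T^*(\mu_m)(A_m\cap K_j)|\to 0$ as $m\to\infty$ for each fixed $j\neq i$, and a diagonal extraction produces a subsequence (still denoted $\mu_n$, $A_n$) satisfying $|T^*(\mu_n)(A_n\cap K_j)|<\varepsilon/2^{j+1}$ whenever $j\leq n$ and $j\neq i$. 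Putting $A'_n:=A_n\setminus\bigcup_{j\leq n,\,j\neq i}K_j$ (which belongs to $\bigotimes_{\ell\neq i}\Clop(K_\ell)$ and is therefore clopen in $K_{-i}$) yields pairwise disjoint clopens of $K_{-i}$ contained in $\bigcup_{j>n}K_j$ and still satisfying $|T^*(\mu_n)(A'_n)|>\varepsilon/2$.

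At this point the specific structure of the $\bigotimes$-algebra becomes crucial. Since only the finitely many $A'_n$ with $n<j$ meet $K_j$, for every $E\subseteq\N$ the union $\bigcup_{n\in E}A'_n$ still belongs to $\bigotimes_{\ell\neq i}\Clop(K_\ell)$, hence is clopen in $K_{-i}$ (the same holds for its complement). This allows one to define
$$
\iota\colon\ell_\infty\longrightarrow C(K_{-i}),\qquad \iota(a)=\sum_m a_m\chi_{A'_m},
$$
since every level set of $\iota(a)$ is a clopen set of the above form. One checks that $\iota$ is an isometric linear embedding, and a routine dominated convergence argument identifies its adjoint: $\iota^*(\nu)=\bigl(\nu(A'_m)\bigr)_m\in\ell_1\subseteq\ell_\infty^*$. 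Consequently
$$
(T\iota)^*(\mu_n)=v_n:=\bigl(T^*(\mu_n)(A'_m)\bigr)_m\in\ell_1.
$$

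The final step is clean. Because $|v_n(n)|=|T^*(\mu_n)(A'_n)|>\varepsilon/2$ for every $n$, one has $\sup_n\sum_{m>N}|v_n(m)|\geq\varepsilon/2$ for each $N\in\N$, so $\{v_n\}$ violates the standard uniform $\ell_1$-tail characterization of relative norm compactness in $\ell_1$. By the Schur property of $\ell_1$, $\{v_n\}$ is not relatively weakly compact either, hence $(T\iota)^*$ and, by Gantmacher, $T\iota\colon\ell_\infty\to C(K_i)$ are not weakly compact. Rosenthal's theorem on operators from $\ell_\infty$ then furnishes $E\subseteq\N$ such that $T\iota|_{\ell_\infty(E)}$ is an isomorphism onto its image, yielding an isomorphic copy of $\ell_\infty$ inside $C(K_i)$, the desired contradiction. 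The main technical point requiring care is verifying that $\iota$ really is a norm-preserving continuous embedding; this is precisely where the closure of $\bigotimes_{\ell\neq i}\Clop(K_\ell)$ under the countable unions $\bigcup_{n\in E}A'_n$ is essential, and it is this feature that distinguishes the $\bigotimes$-construction from the $\bigoplus^{\mathcal{B}}$-construction treated in Theorem~\ref{thm-Grothendieck-spaces}.
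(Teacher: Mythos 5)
Your overall strategy is the paper's: extract the Dieudonn\'e--Grothendieck witnesses, remove the finitely many coordinates using the weak compactness of the operators from $C(K_j)$ into $C(K_i)$, pass to the shrunken clopen sets $A'_n$, build a copy of $\ell_\infty$ inside $C(K_{-i})$ out of the sets $\bigcup_{n\in E}A'_n$ (the paper realizes the same copy through the finite subalgebras $\mathcal{A}_j'$ of $\Clop(K_j)$; your Boolean homomorphism $E\mapsto B_E:=\bigvee_{n\in E}A'_n$ is a more direct description), restrict $T$ to it and invoke Rosenthal. Up to and including the construction of $\iota$ the argument is sound, modulo the harmless imprecision that $\bigcup_{n\in E}A'_n$ must be read as the supremum $B_E$ in $\bigotimes_{j\neq i}\Clop(K_j)$, whose associated clopen set $[B_E]$ is the \emph{closure} in $K_{-i}$ of the union of the $[A'_n]$, $n\in E$, and not that union itself.

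The genuine flaw is the identification $\iota^*(\nu)=\bigl(\nu(A'_m)\bigr)_m\in\ell_1$. This fails exactly because $[B_E]\setminus\bigcup_{n\in E}[A'_n]$ is nonempty for infinite $E$ and a measure $\nu\in M(K_{-i})$ may charge this remainder: for $\nu=\delta_x$ with $x\in[B_\N]\setminus\bigcup_m[A'_m]$ one has $\nu(A'_m)=0$ for every $m$ while $\iota^*(\nu)\neq 0$ (it is a limit-along-an-ultrafilter functional). In general $\iota^*(\nu)$ is the image of $\nu|_{[B_\N]}$ under the quotient map $[B_\N]\to\beta\N$ dual to $\iota$, and it need not belong to $\ell_1$; hence $(T\iota)^*(\mu_n)$ need not lie in $\ell_1$ and the Schur-property step does not apply as written. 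The conclusion is easily rescued, and this is in effect what the paper does: the coordinate computation is still valid, namely $(T\iota)^*(\mu_n)(\chi_{\{m\}})=T^*(\mu_n)(A'_m)$, so $\bigl|(T\iota)^*(\mu_n)(\chi_{\{n\}})\bigr|>\eps/2$ for all $n$; since the singletons $\{m\}$ are pairwise disjoint open subsets of $\beta\N$, the Dieudonn\'e--Grothendieck criterion shows directly that $\{(T\iota)^*(\mu_n)\}_{n\in\N}$ is not relatively weakly compact, hence $T\iota$ is not weakly compact, and Rosenthal's theorem on operators from $\ell_\infty$ gives the copy of $\ell_\infty$ in $C(K_i)$ and the contradiction. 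With that one step replaced, your proof is a correct, slightly more streamlined variant of the paper's (which keeps $T|_Y$ for $Y\subseteq C(K_{-i})$ isomorphic to $\ell_\infty$ and quotes Pe\l czy\'nski/Rosenthal for fixing first $c_0$ and then $\ell_\infty$).
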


\begin{proof}
Suppose that there are $i\in \N$ and a bounded operator $T:
C(K_{-i})\rightarrow C(K_{i})$ which is not weakly compact. Then by
Gantmacher's Theorem its adjoint neither is weakly compact, which
means by the Dieudonn\'{e}-Grothendieck Theorem \cite[VII.14]{Die} that
there are a bounded sequence of measures $\mu_n\in M(K_{i})$,
pairwise disjoint clopen subsets $A_n$ of $K_{-i}$, and
$\varepsilon>0$ such that
$$
|T^*(\mu_n)(A_n)|>\varepsilon \qquad (n\in\N).
$$
For each $j\neq i$, we use Lemma~\ref{lemma-duality-proy-injec} to
write $(TP_{-i} I_{j})^*=P^jI^{-i}T^*$ and we observe that $TP_{-i}
I_{j}$ is an operator from $C(K_{j})$ into $C(K_{i})$ and thus it is
weakly compact by hypothesis. Hence,
$\big\{P^jI^{-i}T^*(\mu_n)\big\}_{n\in\N}$ is relatively weakly
compact and so
$$
\big\{P^jI^{-i}T^*(\mu_n)(A_n\cap
K_j)\big\}_{n\in\N}\longrightarrow0
$$
for every $j\neq i$, a fact which is obviously true for $j=i$. We
also observe that for $j\neq i$ and $n\in \N$ we have
\begin{align*}
P^jI^{-i}T^*(\mu_n)(A_n\cap K_j)&=I^{-i}T^*(\mu_n)(A_n\cap
K_j)\\&=T^*(\mu_n)(A_n\cap K_j\cap K_{-i})=T^*(\mu_n)(A_n\cap K_j)
\end{align*}
so, by passing to a convenient subsequence, we get that
$$
|T^*(\mu_n)(A_n\cap K_j)|<\frac{\eps}{2^{j+1}}
$$
for every $j\in \{1,\dots,n\}$. Now, for $n\in\N$, we consider the
clopen subset of $K_{-i}$ given by
$\widetilde{A}_n=A_n\setminus\overset{n}{\underset{j=1}{\cup}}K_j$
which satisfies
\begin{align}\label{eq:condition-non-w-compact}
|T^*(\mu_n)(\widetilde{A}_n)|&=\left|T^*(\mu_n)(A_n)-
T^*(\mu_n)\Big(A_n\cap\Big(\overset{n}{\underset{j=1}{\cup}}K_j\Big)\Big)\right|\\
&\geq|T^*(\mu_n)(A_n)|-\sum_{j=1}^n|T^*(\mu_n)(A_n\cap
K_j)|>\eps-\frac{\eps}{2}\,.\notag
\end{align}
Besides, for $j\in\N\setminus\{i\}$ we define $\mathcal{A}_j'$ as
the finite subalgebra of $\Clop(K_j)$ generated by
$$
\Big\{\widetilde{A}_n\cap K_j \ : \ n<j\Big\}
$$
and we observe that $\widetilde{A}_n$ belongs to
$\bigotimes_{j\not=i}\mathcal{A}_j'$ for every $n\in\N$.
Moreover, it is not hard to check that
$\bigotimes_{j\not=i}\mathcal{A}_j'$ is a subalgebra of
$\bigotimes_{j\not=i}\Clop(K_j)$ isomorphic to $\wp(\N)$, so
the corresponding subspace $Y$ of $C(K_{-i})$ is isomorphic to
$\ell_\infty$. In fact, $Y$ is the closure of the space spanned
by the characteristic functions of clopen sets in the image of
the projection
$P:\bigotimes_{j\not=i}\Clop(K_j)\longrightarrow\bigotimes_{j\not=i}\mathcal{A}_j'$
given by
$$
P(A)=\bigcup \{a_{j,k} \ : \ x_{j,k}\in A\} \qquad
\left(A\in\textstyle{\bigotimes}_{j\not=i}\Clop(K_j)\right)
$$
where $\{a_{j,k}: k<k_j\}$ denotes the collection of all atoms of
$\mathcal{A}_j'$ for all $j\in\N\setminus\{i\}$ and some $k_j\in\N$,
and $x_{j,k}\in a_{j,k}$ are some fixed points  for
$j\in\N\setminus\{i\}$ and  $k<k_j$. Since every element of
$\bigotimes_{j\not=i}\mathcal{A}_j'$ is contained in $K_{-i}$ we get
that $Y\subset C(K_{-i})$. Now, for $n\in\N$, we observe that
$$
\left(T|_{Y}\right)^*(\mu_n)(\widetilde{A}_n)=T^*(\mu_n)(\widetilde{A}_n)
$$
therefore, by using \eqref{eq:condition-non-w-compact}, we can
deduce that $T|_{Y}$ is an operator from an injective space of
continuous functions into $C(K_i)$ which is not weakly compact
and so it fixes a copy of $c_0$ (see \cite[Section~4]{Pel} or
\cite[Theorem~4.5]{Ros2}). Therefore, we can use
\cite[Proposition~1.2]{Ros} to deduce that $T|_{Y}$ fixes a
copy of $\ell_\infty$ since $Y$ is injective. This gives a
contradiction since $C(K_i)$ does not contain any copy of
$\ell_\infty$ by the hypothesis.
\end{proof}

\begin{theorem}
There is a compact space $K$ so that $C(K)$ is extremely
non-complex and contains an isometric complemented copy of
$\ell_\infty$.
\end{theorem}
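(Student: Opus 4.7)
The plan is to mirror the construction of Theorem~\ref{thm-containing-C-Delta} with the countable Boolean algebra $\mathcal{B}$ there (which produced a complemented $C(2^\omega)$) replaced by the full power set $\wp(\N)$, which should produce a complemented $\ell_\infty=C(\beta\N)$. Concretely, take $(K_i)_{i\in\N}$ as in Proposition~\ref{prop-family-Ki-weak-multipliers} and let $K$ be the Stone space of $\bigotimes_{i<\omega}\Clop(K_i)$. The enlargement of $\mathcal{B}$ to $\wp(\N)$ destroys the weak$^*$-sequential compactness of $B_{M(L)}$ that Theorem~\ref{thm-Grothendieck-spaces} relied on; this is precisely why Theorem~\ref{thm-l-infinity} was set up in advance, trading that hypothesis for the assumption that $C(K_i)$ contains no copy of $\ell_\infty$.

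I therefore first have to verify that latter hypothesis. Since every operator on $C(K_i)$ is a weak multiplier, Theorem~\ref{Schlackow-theorem} tells us that $L(C(K_i))/W(C(K_i))$ is commutative; if $\ell_\infty$ embedded in $C(K_i)$, then by injectivity of $\ell_\infty$ it would be $1$-complemented there, so $L(\ell_\infty)/W(\ell_\infty)$ would embed canonically into $L(C(K_i))/W(C(K_i))$ (as the introduction explains after Theorem~\ref{Schlackow-theorem}); but the former is non-commutative, since $\ell_\infty$ admits operators which are not weak multipliers, a contradiction. With that secured, Theorem~\ref{thm-l-infinity} yields that every operator $C(K_{-i})\to C(K_i)$ is weakly compact, and Proposition~\ref{prop-suf-condition} then gives that $C(K)$ is extremely non-complex.

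For the complemented isometric copy of $\ell_\infty$, I would fix $x_i\in K_i$ for each $i\in\N$, let $\mathcal{B}'$ be the subalgebra of $\bigotimes_{i<\omega}\Clop(K_i)$ consisting of the sets $\bigcup_{i\in A}K_i$ with $A\subseteq\N$ (manifestly isomorphic to $\wp(\N)$), and define the map
$$
P:\textstyle\bigotimes_{i<\omega}\Clop(K_i)\longrightarrow \mathcal{B}',\qquad P(C)=\bigcup\{K_i\ :\ x_i\in C\}.
$$
A routine check shows $P$ is a Boolean homomorphism which is the identity on $\mathcal{B}'$, so Fact~\ref{fact-Boolean-projections} produces a norm-one projection from $C(K)$ onto a subspace isometric to $C(L')$, where $L'$ is the Stone space of $\mathcal{B}'\cong\wp(\N)$, i.e.\ $L'\cong\beta\N$; this subspace is thus isometric to $\ell_\infty$. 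The only genuinely non-routine input in this plan is the Schlackow-based verification that the building blocks $C(K_i)$ contain no $\ell_\infty$; the rest is direct assembly of results already proved.
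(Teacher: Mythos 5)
Your construction and its assembly are exactly those of the paper: the same $K$ (Stone space of $\bigotimes_{i<\omega}\Clop(K_i)$ over the family from Proposition~\ref{prop-family-Ki-weak-multipliers}), the same Boolean projection onto the subalgebra $\{\bigcup_{i\in b}K_i : b\subseteq\N\}\cong\wp(\N)$ giving, via Fact~\ref{fact-Boolean-projections}, a norm-one projection onto an isometric copy of $C(\beta\N)=\ell_\infty$, and the same route to extreme non-complexity through Theorem~\ref{thm-l-infinity} and Proposition~\ref{prop-suf-condition}. The only genuine divergence is in the step you rightly single out, namely that $C(K_i)$ contains no copy of $\ell_\infty$. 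The paper argues this more elementarily: a copy of $\ell_\infty$ is automatically complemented, so $C(K_i)$ would have hyperplanes isomorphic to the whole space, which is excluded by \cite[Theorem~2.4]{Koszmider} for spaces on which all operators are weak multipliers. You instead invoke Schlackow's Theorem~\ref{Schlackow-theorem}: commutativity of $L(C(K_i))/W(C(K_i))$ would be violated because a complemented copy of $\ell_\infty=C(\beta\N)$ canonically embeds the noncommutative ring $L(\ell_\infty)/W(\ell_\infty)$ (as explained after Theorem~\ref{Schlackow-theorem}). This is valid, and it has the mild appeal of reusing the ring-theoretic viewpoint already present in the paper; its cost is that it leans on Schlackow's result, which the paper only quotes, whereas the paper's own argument needs nothing beyond \cite{Koszmider}. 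Two small inaccuracies in your write-up, neither affecting correctness: an isomorphic (rather than isometric) copy of $\ell_\infty$ is complemented by injectivity but not necessarily $1$-complemented, and your opening description of the construction as ``replacing $\mathcal{B}$ by $\wp(\N)$'' in $\bigoplus^{\mathcal B}$ is only motivational --- the algebra actually used, both by you and by the paper, is the larger $\bigotimes_{i<\omega}\Clop(K_i)$, for which Theorem~\ref{thm-l-infinity} is tailored.
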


\begin{proof}
Let $(K_i)_{i\in\N}$ be the family of perfect and totally
disconnected compact spaces given in
Proposition~\ref{prop-family-Ki-weak-multipliers} then, the Stone
space $K$ of $\bigotimes_{i<\omega}\Clop(K_i)$ satisfies the
requested conditions. Indeed, we choose $x_i\in K_i$ for every
$i\in\N$, we consider the subalgebra $\mathcal{B}'$ of $\Clop(K)$
consisting of the elements of the form $\bigcup_{i\in b}K_i$ for
$b\subseteq \N$, and we define a projection
$P:\bigotimes_{i<\omega}\Clop(K_i)\longrightarrow\mathcal{B}'$ by
$$
P(A)=\bigcup_{i\in b_A} K_i\qquad
\Big(A\in\textstyle{\bigotimes}_{i<\omega}\Clop(K_i)\Big)
$$
where $b_A=\{i\in\N \ : \ x_i\in A\}$. By
Fact~\ref{fact-Boolean-projections}, $P$ induces a norm-one
projection from $C(K)$ onto a subspace isometric to
$C(S(\wp(\N))$ where $S(\wp(\N))$ is the Stone space of
$\wp(\N)$,i.e., the Stone-$\check{\text{C}}$ech
compactification $\beta \N$ of $\N$. Hence, $C(S(\wp(\N))$ is
isometric to $\ell_\infty$. To finish the proof we observe that
$C(K_i)$ does not contain any copy of $\ell_\infty$. Indeed, a
$C(K)$ space containing a (necessarily complemented) copy of
$\ell_\infty$ obviously has hyperplanes isomorphic to the
entire space which is not the case for $C(K_i)$ by
\cite[Theorem~2.4]{Koszmider}. Therefore, we can use
Theorem~\ref{thm-l-infinity} and
Proposition~\ref{prop-suf-condition} to obtain that $C(K)$ is
extremely non-complex.
\end{proof}

\begin{remark} By Theorem~\ref{Schlackow-theorem} and the
comments below it the above space has
many operators which are not weak multipliers.
\end{remark}

\emph{Acknowledgement.} The first author expresses his gratitude to
the other authors for their support and hospitality during his visit
to the University of Granada.


\begin{thebibliography}{99}

\bibitem{AbrAli-1} \textsc{Y.~Abramovich and C.~Aliprantis},
    \emph{An invitation to Operator Theory}, Graduate Texts in
    Math.\ \textbf{50}, AMS, Providence, RI, 2002.

\bibitem{AbrAli-2} \textsc{Y.~Abramovich and C.~Aliprantis},
    \emph{Problems in Operator Theory}, Graduate Texts in
    Math.\ \textbf{51}, AMS, Providence, RI, 2002.

\bibitem{Albiac-Kalton} \textsc{F.~Albiac and N.~J.~Kalton},
    \emph{Topics in Banach space theory}, Graduate Texts in
    Mathematics \textbf{233}, Springer-Verlag, New York 2006.

\bibitem{Argyros-Felouzis} \textsc{S.~A.~Argyros and
    V.~Felouzis}, Interpolating hereditarily indecomposable Banach spaces,  \emph{J.
Amer. Math. Soc.} \textbf{13} (2000), 243--294.

\bibitem{Dau} \textsc{I.~K.~Daugavet}, On a property of
    completely continuous operators in the space $C$,
    \emph{Uspekhi Mat. Nauk} \textbf{18} (1963), 157--158
    (Russian).

\bibitem{Dieu1952} \textsc{J.~Dieudonn\'{e}}, Complex Structures on
    Real Banach Spaces, \emph{Proc. Amer. Math. Soc.} \textbf{3}, (1952),
    162--164.

\bibitem{Die} \textsc{J.~Diestel}, \emph{Sequences and series
    in Banach spaces}, Graduate Texts in Mathematics
     \textbf{92}, Springer-Verlag, New York, 1984.

\bibitem{D-Mc-P-W} \textsc{J.~Duncan, C.~M.~McGregor, J.~D.~Pryce and
A.~J.~White}, The numerical index of a normed space, \emph{J. London Math.
Soc.}, \textbf{2} (1970), 481--488.

\bibitem{DS} \textsc{N.~Dunford and J.~Schwartz},
\emph{Linear Operators, Part I: General Theory}, Interscience, New
York, 1957.

\bibitem{Fabian} \textsc{M.~Fabian et al.},
\emph{Functional Analysis and Infinite-Dimensional Geometry}, CMS
books in mathematics \textbf{8}, Springer-Verlag, New York, 2001.

\bibitem{Ferenczi-Adv} \textsc{V.~Ferenczi}, Uniqueness of
    complex structure and real hereditarily indecomposable
    Banach spaces, \emph{Adv. Math.}
    \textbf{213} (2007), 462--488.

\bibitem{FerenMedina} \textsc{V.~Ferenczi and E.~Medina
    Galego}, Even infinite-dimensional real Banach spaces,
    \emph{J. Funct. Anal.} \textbf{253}, (2007), 534--549.

\bibitem{Foias-Singer} \textsc{C.~Foias and I.~Singer}, Points
    of diffusion of linear operators and almost diffuse
    operators in spaces of continuous functions, \emph{Math. Z.} \textbf{87}
    (1965), 434--450.

\bibitem{Gower1994} \textsc{W.~T.~Gowers}, A solution to
    Banach's hyperplane problem, \emph{Bull. London Math. Soc.} \textbf{26}
    (1994), 523--530.

\bibitem{GowMau1993} \textsc{W.~T.~Gowers and B.~Maurey}, The
unconditional basic sequence problem, \emph{J. Amer. Math.
Soc.} \textbf{6} (1993), 851--874.

\bibitem{GowMau1997} \textsc{W.~T.~Gowers and B.~Maurey},
Banach spaces with small spaces of operators, \emph{Math. Ann.} \textbf{307} (1997), 543--568.

\bibitem{Hal} \textsc{P.~R.~Halmos}
\emph{Lectures on Boolean algebras}, Van Nostrand Mathematical
Studies, No. \textbf{1} D. Van Nostrand Co., Inc., Princeton, N.J.
1963.

\bibitem{James1950} \textsc{R.~C.~James}, Bases and reflexivity
    of Banach spaces, \emph{Ann. of Math.} \textbf{52} (1950), 518--527.

\bibitem{K-M-M} \textsc{V.~Kadets, M.~Mart\'{\i}n, and J.~Mer\'{\i}},
    Norm equalities for operators on Banach spaces, \emph{Indiana U. Math. J.}
    \textbf{56} (2007), 2385--2411.

\bibitem{KSSW0} \textsc{V.~M.~Kadets, R.~V.~Shvidkoy,
    G.~G.~Sirotkin, and D.~Werner}, Espaces de Banach ayant la
    propri\'{e}t\'{e} de Daugavet, \emph{C. R. Acad. Sci. Paris}, S\'{e}r.
    I, \textbf{325} (1997), 1291--1294.

\bibitem{KSSW} \textsc{V.~Kadets, R.~Shvidkoy, G.~Sirotkin, and
    D.~Werner}, Banach spaces with the Daugavet property,
    \emph{Trans. Amer. Math. Soc.} \textbf{352} (2000),
    855--873.

\bibitem{Kop} \textsc{S.~Koppelberg}, \emph{Handbook of Boolean algebras}, vol.
1, North-Holland Publishing Co., Amsterdam, 1989.


\bibitem{Koszmider} \textsc{P.~Koszmider}, Banach spaces of
    continuous functions with few operators, \emph{Math. Ann.}
    \textbf{330} (2004), 151--183.

\bibitem{Oik2007} \textsc{T.~Oikhberg}, Some properties related
    to the Daugavet Property, in: \emph{Banach Spaces and their
    Applications in Analysis In Honor of Nigel Kalton's 60th
    Birthday} (B.~Randrianantoanina and N.~Randrianantoanina,
    Editors), Walter de Gruyter, Berlin, 2007.

\bibitem{Pel} \textsc{A.~Pe\l czy\'nski}, Projections in certain
    Banach spaces, \emph{Studia Math.} \textbf{19} (1960), 209--228.

\bibitem{Plebanek} \textsc{G.~Plebanek}, A construction of a Banach space $C(K)$
with few operators, \emph{Topology Appl.} \textbf{143} (2004), 217--239.

\bibitem{Ros} \textsc{H.~Rosenthal}, On relatively disjoint families of
measures with some applications to Banach space theory, \emph{Studia
Math.} \textbf{37} (1970), 13--36; Correction, ibid., 311--313.

\bibitem{Ros2} \textsc{H.~Rosenthal}, \emph{The Banach spaces $C(K)$}, Handbook of Geomtry
of Banach spaces Vol.~2, Elsevier (2003), Chapter 36, 1547--1602.

\bibitem{Schl} \textsc{I.~Schlackow}, Centripetal operators and
    Koszmider spaces, \emph{Topology Appl.} \textbf{155} (2008),
    1227--1236.

\bibitem{Sem} \textsc{Z.~Semadeni}, \emph{Banach spaces of Continuous
functions}, Vol. I. Monografie Matematyczne, Tom 55. PWN--Polish
Scientific Publishers, Warsaw, 1971.

\bibitem{Sik} \textsc{R.~Sikorski}, \emph{Boolean algebras}
    (second edition), Ergebnisse der Mathematik und ihrer
    Grenzgebiete, NeueFolge, Band \textbf{25} Academic Press
    Inc., Springer-Verlag, Berlin-New York, 1964.

\bibitem{Szarek-PAMS86} \textsc{S.~Szarek},
 A superreflexive Banach space which does not admit complex structure,
\emph{Proc. Amer. Math. Soc.} \textbf{97} (1986), 437--444.

\bibitem{Wer0} \textsc{D.~Werner}, An elementary approach to
    the Daugavet equation, in: \emph{Interaction between
    Functional Analysis, Harmonic Analysis and Probability}
    (N.~Kalton, E.~Saab and S.~Montgomery-Smith editors),
    Lecture Notes in Pure and Appl. Math. \textbf{175} (1994),
    449--454.

\bibitem{WerSur} \textsc{D.~Werner}, Recent progress on the
    Daugavet property, \emph{Irish Math. Soc. Bull.}
    \textbf{46} (2001), 77-97.

\end{thebibliography}
\end{document}